   \newcommand{\Hom}{\operatorname{Hom}}
   \theoremstyle{plain}%default
   \newtheorem{thm}{Theorem}[section]
   \newtheorem{prop}[thm]{Proposition}
   \newtheorem{lem}[thm]{Lemma}
   \newtheorem{cor}[thm]{Corollary}
   \theoremstyle{definition}
   \newtheorem{defn}[thm]{Definition}
   \newtheorem{example}[thm]{Example}
   \theoremstyle{remark}
   \newtheorem{remark}[thm]{Remark}
 \numberwithin{equation}{section}
\author{V. Manuilov}
\date{}
\address{Moscow Center for Fundamental and Applied Mathematics {\rm and} Moscow State University,
Leninskie Gory 1, Moscow, 
119991, Russia}
\email{manuilov@mech.math.msu.su}
\thanks{The author acknowledges partial support by the RFBR grant.}
\title{Metrics on doubles as an inverse semigroup II}
\begin{document}

\begin{abstract}
We have shown recently that, given a metric space $X$, the coarse equivalence classes of metrics on the two copies of $X$ form an inverse semigroup $M(X)$. Here we give a description of the set $E(M(X))$ of idempotents of this inverse semigroup and of its Stone dual space $\widehat X$. We also construct $\sigma$-additive measures on $\widehat X$ from finitely additive probability measures on $X$ that vanish on bounded subsets. 

\end{abstract}

\maketitle

\section*{Introduction}

Given metric spaces $X$ and $Y$, a metric $d$ on $X\sqcup Y$ that extends the metrics on $X$ and $Y$, depends only on the values $d(x,y)$, $x\in X$, $y\in Y$, but it may be not easy to check which functions $d:X\times Y\to (0,\infty)$ determine a metric on $X\sqcup Y$: one has to check infinitely many triangle inequalities. The problem of description of all such extended metrics is difficult due to the lack of a nice algebraic structure on the set of metrics. It turns out that, passing to quasi-equivalence (or coarse equivalence) of metrics, we can find some algebraic structure. First, we can define a composition: if $d$ is a metric on $X\sqcup Y$, and if $\rho$ is a metric on $Y\sqcup Z$ then the formula $(\rho\circ d)(x,z)=\inf_{y\in Y}[d(x,y)+\rho(y,z)]$ defines a metric $\rho d$ on $X\sqcup Z$. The idea to consider equivalence classes of metrics on the disjoint union of two spaces as morphisms from one space to another was suggested in \cite{Manuilov-Morphisms}.
 
It was a surprise for us to discover that in the case $Y=X$, there is a nice algebraic structure on the set $M^q(X)$ (resp., $M^c(X)$) of quasi-equivalence (resp., of coarse equivalence) classes of metrics on the double $X\sqcup X$: they form an {\it inverse} semigroup with respect to this composition \cite{M}. 

Recall that a semigroup $S$ is an inverse semigroup if for any $u\in S$ there exists a unique $v\in S$ such that $u=uvu$ and $v=vuv$ \cite{Lawson}. Philosophically, inverse semigroups describe local symmetries in a similar way as groups describe global symmetries, and technically, the construction of the (reduced) group $C^*$-algebra of a group generalizes to that of the (reduced) inverse semigroup $C^*$-algebra \cite{Paterson}. 

Any two idempotents of an inverse semigroup $S$ commute, and the semilattice $E(S)$ of all idempotents of $S$ generates a commutative $C^*$-algebra. Our aim is to get a better understanding of the Stone dual %(the Gelfand dual) 
spaces $\widehat X^q$ of $E(M^q(X))$, and $\widehat X^c$ of $E(M^c(X))$.

It turns out that the coarse equivalence is better suited for study of the inverse semigroup from metrics on doubles, so we can give more detailed results for the semigroup $M^c(X)$.

We give several descriptions of $E(M^q(X))$ and $E(M^c(X))$, provide a description of the spaces $\widehat X^q$ and $\widehat X^c$ dual to $E(M^q(X))$ and to $E(M^c(X))$, respectively, and, under certain restrictions, we describe a dense set of $\widehat X^c$ in terms of free ultrafilters on $X$. We also construct $\sigma$-additive measures on $\widehat X^c$ from finitely additive probability measures on $X$ that vanish on bounded subsets. 
\bigskip

\section{The inverse semigroup from a metric space}

We begin with some basic facts from \cite{M}. 
Let $X$ be a metric space with a fixed metric $d_X$. 

\begin{defn}
A {\it double} of $X$ is a metric space $X\times\{0,1\}$ with a metric $d$ such that 
\begin{itemize}
\item[(d1)]
the restriction of $d$ on each copy of $X$ in
$X\times\{0,1\}$ equals $d_X$; 
\item[(d2)]
the distance between the two copies of $X$ is non-zero.
\end{itemize}

Let $\mathcal M(X)$ denote the set of all such metrics.

\end{defn}

We identify $X$ with $X\times\{0\}$, and write $X'$ for $X\times\{1\}$. Similarly, we write 
$x$ for $(x,0)$ and $x'$ for $(x,1)$, $x\in X$. 
Note that metrics on a double of $X$ may differ only when two points lie in different copies of $X$.

Recall that two metrics, $d_1$, $d_2$, on the double of $X$ are {\it quasi-equivalent} if there exist $\alpha>0,\beta\geq 1$ such that 
$$
%\begin{equation}\label{quasi}
-\alpha+\frac{1}{\beta} d_1(x,y')\leq d_2(x,y')\leq \alpha+\beta d_1(x,y')
$$
%\end{equation} 
for any $x,y\in X$. They are {\it coarse equivalent} if there exist monotone functions $\varphi,\psi:[0,\infty)\to[0,\infty)$ with $\lim_{t\to\infty}\varphi(t)=\psi(t)=\infty$ such that 
$$
\varphi(d_1(x,y')\leq d_2(x,y')\leq\psi(d_1(x,y'))
$$
for any $x,y\in X$. 
If $d_1$ and $d_2$ are quasi-equivalent (resp., coarse equivalent) then we write
$d_1\sim_q d_2$ (resp., $d_1\sim_c d_2$), and the quasi-equivalence (resp., coarse equivalence) class of $d$ is denoted by $[d]_q$ (resp., by $[d]_c$). If the kind of equivalence doesn't matter then we write $d_1\sim d_2$ and $[d]$. The set of quasi-equivalence (resp., of coarse equivalence) of metrics in $\mathcal M(X)$ we denote by $M^q(X)$ (resp., by $M^c(X)$). When the kind of equivalence doesn't matter, we write $M(X)$ for either of them. 

Similarly, we consider two kinds of equivalence on the set of positive-valued functions on $X$:
Two functions, $f,g:X\to(0,\infty)$ are quasi-equivalent, $f\sim_q g$, if there exist $\alpha>0,\beta\geq 1$ such that 
$$
%\begin{equation}\label{quasi}
-\alpha+\frac{1}{\beta} f(x)\leq g(x)\leq \alpha+\beta f(x)
$$
%\end{equation} 
for any $x\in X$. They are coarse equivalent, $f\sim_c g$, if there exist monotone functions $\varphi,\psi:[0,\infty)\to[0,\infty)$ with $\lim_{t\to\infty}\varphi(t)=\psi(t)=\infty$ such that 
$$
\varphi(f(x)\leq g(x)\leq\psi(f(x))
$$
for any $x\in X$. 

For a metric $d\in\mathcal M(X)$ define the adjoint metric $d^*$ by setting $d^*(x,y')=d(y,x')$, $x,y\in X$.
A class $s\in M(X)$ is selfadjoint if there exists $d\in s$ such that $d^*=d$. 

The following results were proved in \cite{M}:

\begin{thm}\label{projections}
Let $d^*=d$ be a metric on the double of $X$. Then $[d^2]_q=[d]_q$ if and only if there exist $\alpha\geq 0$, $\beta\geq 1$ such that 
$-\alpha+\frac{1}{\beta} d(x,x')\leq d(x,X')$ for any $x\in X$.

\end{thm}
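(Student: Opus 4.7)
The key computational identity is the identity
\eQ{d^2(x,x')=\inf_{y\in X}\bigl[d(x,y')+d(y,x')\bigr]=2\,d(x,X'),}
which follows from the selfadjointness $d^*=d$ (the two terms in the infimum coincide, and their infimum is $2d(x,X')$). With this identity in hand, both directions become manageable.

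\textbf{The forward direction.} Assume $[d^2]_q=[d]_q$, so there are $\alpha\geq 0,\beta\geq 1$ with $-\alpha+\tfrac{1}{\beta}d(x,z')\leq d^2(x,z')\leq \alpha+\beta d(x,z')$ for all $x,z\in X$. Specializing to $z=x$ and substituting $d^2(x,x')=2d(x,X')$ yields
\eQ{-\alpha+\tfrac{1}{\beta}d(x,x')\leq 2\,d(x,X'),}
so after absorbing the factor $2$ into new constants ($\alpha'=\alpha/2$, $\beta'=2\beta$) the desired inequality $-\alpha'+\tfrac{1}{\beta'}d(x,x')\leq d(x,X')$ holds. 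This half is essentially free.

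\textbf{The reverse direction.} Assume $d(x,X')\geq\tfrac{1}{\beta}d(x,x')-\alpha$ for all $x\in X$, equivalently $d(x,x')\leq\beta\,d(x,X')+\alpha\beta$. One must produce constants showing $[d^2]_q=[d]_q$. For the \emph{upper bound} $d^2(x,z')\leq A+B\,d(x,z')$: plug $y=z$ into the infimum defining $d^2$ and bound
\eQ{d^2(x,z')\leq d(x,z')+d(z,z')\leq d(x,z')+\beta\,d(z,X')+\alpha\beta\leq(1+\beta)\,d(x,z')+\alpha\beta,}
where in the last step I use $d(z,X')\leq d(z,x')=d(x,z')$ (the latter equality by $d^*=d$).

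For the \emph{lower bound} $d^2(x,z')\geq\tfrac{1}{B'}d(x,z')-A'$ I will estimate from below, for an arbitrary $y\in X$, the quantity $d(x,y')+d(y,z')$. Two applications of the triangle inequality in the double give
\eQ{d(x,z')\leq d(x,y')+d(y',z')=d(x,y')+d_X(y,z)\leq d(x,y')+d(y,z')+d(z,z').}
Now bound $d(z,z')\leq\beta\,d(z,X')+\alpha\beta\leq\beta\,d(y,z')+\alpha\beta$ using again $d(z,X')\leq d(z,y')=d(y,z')$. Combining,
\eQ{d(x,z')\leq d(x,y')+(1+\beta)\,d(y,z')+\alpha\beta\leq (1+\beta)\bigl[d(x,y')+d(y,z')\bigr]+\alpha\beta,}
which rearranges to $d(x,y')+d(y,z')\geq\tfrac{1}{1+\beta}d(x,z')-\alpha\beta$, and taking the infimum over $y$ yields the required lower bound on $d^2(x,z')$.

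\textbf{Where the difficulty lies.} Nothing here is deep; the substance is in recognizing which triangle inequality to apply to replace the internal copy term $d(y',z')=d_X(y,z)$ by something involving $d(y,z')$, and then in using the hypothesis precisely once to control $d(z,z')$ in terms of $d(x,z')$ or $d(y,z')$. The hypothesis is exactly the content needed to say that $d(z,z')$ cannot be much larger than any cross-copy distance $d(z,u')$. Once this is noticed, both directions fit into a few lines.
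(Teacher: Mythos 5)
Your proof is correct: the identity $d^2(x,x')=2d(x,X')$ (valid since $d^*=d$), the specialization $z=x$ for the forward direction, and the triangle-inequality estimates using the hypothesis exactly once to control $d(z,z')$ all check out, and they yield genuine quasi-equivalence constants ($B=1+\beta$, $A=\alpha\beta$). Note that the paper itself only quotes Theorem \ref{projections} from \cite{M} without reproducing a proof, so there is nothing in-paper to compare against; your route is the natural one and is consistent with how the paper uses the result elsewhere (e.g.\ the estimate $2d(x,X')\geq d(x,x')$ in Lemma \ref{proj1} and the identity $d^*d(x,x')=2d(x,X')$ in Lemma \ref{A-B}).
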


\begin{prop}\label{xx'}
Let $d$, $\rho$ be two idempotent metrics on the double of $X$, $\rho^*=\rho$, $d^*=d$. Then $\rho\sim d$ if and only if
the functions $x\mapsto \rho(x,x')$ and $x\mapsto d(x,x')$ are equivalent.

\end{prop}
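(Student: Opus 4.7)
The forward direction is immediate: if $\rho \sim d$, then restricting the equivalence inequalities between $\rho(x,y')$ and $d(x,y')$ to the diagonal $y = x$ yields at once that the functions $f_\rho(x) := \rho(x,x')$ and $f_d(x) := d(x,x')$ are equivalent on $X$, with the very same constants (resp.\ monotone envelopes).

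For the reverse direction in the quasi-equivalence setting, suppose $-\alpha + \frac{1}{\beta} f_d \le f_\rho \le \alpha + \beta f_d$ on $X$ for some $\alpha\ge 0$, $\beta\ge 1$. By symmetry it suffices to bound $\rho(x,y')$ above by an affine function of $d(x,y')$. Three ingredients combine. First, the triangle inequality through $y\in X$ gives $\rho(x,y') \le d_X(x,y) + f_\rho(y)$. Second, the hypothesis controls $f_\rho(y) \le \alpha + \beta f_d(y)$. Third, the idempotency of $d$, via Theorem~\ref{projections}, yields $f_d(y) \le \beta\, d(y,X') + \beta\alpha$; since $d(y,X') \le d(y,x') = d(x,y')$ (the last equality from $d^* = d$), one obtains the key estimate $f_d(y) \le \beta\, d(x,y') + \beta\alpha$. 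Fed into the triangle inequality $d_X(x,y) \le d(x,y') + f_d(y)$, this also bounds $d_X(x,y)$ by an affine function of $d(x,y')$. Assembling the three pieces produces $\rho(x,y') \le C\, d(x,y') + C'$ with $C$, $C'$ depending only on $\alpha$ and $\beta$. Interchanging the roles of $d$ and $\rho$, and invoking the idempotency of $\rho$ in the corresponding step, gives the reverse inequality, whence $\rho \sim_q d$.

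The coarse-equivalence version follows the same skeleton: the affine comparisons $-\alpha + t/\beta \le \,\cdot\, \le \alpha + \beta t$ are replaced by monotone envelopes $\varphi(t) \le \,\cdot\, \le \psi(t)$ tending to $\infty$, and the corresponding coarse analog of Theorem~\ref{projections} is used to control $f_d$ by $d(\,\cdot\,,X')$. The only obstacle I foresee is bookkeeping: one has to check that the finitely many compositions of monotone functions accumulated at each step still diverge to $\infty$. This is automatic since any finite composition of monotone functions with that property retains it, so the argument goes through uniformly in both equivalences.
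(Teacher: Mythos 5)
Your argument is correct, and the estimates check out: the diagonal specialization gives the forward direction, and for the converse the chain $\rho(x,y')\le d_X(x,y)+\rho(y,y')$, $\rho(y,y')\le\alpha+\beta d(y,y')$, $d(y,y')\le\beta d(y,X')+\alpha\beta\le\beta d(x,y')+\alpha\beta$ (using $d^*=d$), together with $d_X(x,y)\le d(x,y')+d(y,y')$, assembles into the required one-sided affine bound, and swapping $d$ and $\rho$ finishes the quasi-equivalence case. Note that this paper does not actually prove Proposition \ref{xx'}; it is quoted from \cite{M}, so there is no in-paper proof to compare with, but your route is the natural one given the tools stated here.

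One point you should make explicit rather than wave at: Theorem \ref{projections} is stated only for quasi-equivalence, so in the coarse case you cannot literally ``invoke the corresponding coarse analog'' --- it is not in this paper. Fortunately the direction you need is the easy one: since $d^*d(x,x')=2d(x,X')$ (as used later in Lemma \ref{A-B}), idempotency $[d^*d]_c=[d]_c$ specialized to the diagonal gives $\varphi(d(x,x'))\le 2d(x,X')$ for a monotone $\varphi$ tending to infinity, and a generalized inverse of $\varphi$ (finite-valued and tending to infinity because $\varphi$ does) yields $d(y,y')\le\Phi(d(y,X'))\le\Phi(d(x,y'))$, which is exactly what your coarse chain requires; the remaining bookkeeping with monotone envelopes is, as you say, harmless.
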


\section{Description of projections in $M(X)$ in terms of expanding sequences}

For  a subset $A\subset X$ we denote by $N_r(A)$ the $r$-neighborhood of $A$, i.e. 
$$
N_r(A)=\{x\in X:d_X(x,A)\leq r\}.
$$
The sequence $\mathcal A=\{A_n\}_{n\in\mathbb N}$, where $A_n\subset X$, is an {\it expanding sequence} if it satisfies 
\begin{itemize}
\item[(e1)]
$A_n$ is not empty for some $n\in\mathbb N$;
\item[(e2)]
$N_{1/2}(A_n)\subset A_{n+1}$ for any $n\in\mathbb N$, for which $A_n$ is non-empty.
\end{itemize}
Note that $\cup_{n\in\mathbb N}A_n=X$.

A special class of expanding sequences is given by subsets of $X$. For $A\subset X$, set $\mathcal E_A=\{A_n\}_{n\in\mathbb N}$, where $A_n=N_{n/2}(A)$.

Let $\delta:X\to[1,\infty)$ be any function (not necessarily continuous) on $X$. We shall write $\delta(u,u')$ instead of $\delta(u)$ for $u\in X$ to show that this function measures distance in some sense.

Define a metric on the double of $X$ by 
$$
d(x,y')=\inf_{u\in X}[d_X(x,u)+\delta(u,u')+d_X(u,y)].
$$

\begin{lem}\label{triangle1}
$d$ is a metric for any function $\delta$.

\end{lem}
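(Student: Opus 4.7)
The plan is to verify the four defining properties of a metric on $X\sqcup X'$ one by one, noting that the formula only prescribes the cross-distances $d(x,y')$ while the distances within each copy are given to equal $d_X$.

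First, I would dispose of the easy items. Symmetry between the copies, $d(x,y')=d(y',x)$, together with the selfadjointness $d(x,y')=d(y,x')$, is immediate because the expression $d_X(x,u)+\delta(u,u')+d_X(u,y)$ is symmetric in $x$ and $y$. Positivity and separation of points follow from the hypothesis $\delta\geq 1$: for any $x,y\in X$,
$$
d(x,y')\ \geq\ \inf_{u\in X}\delta(u,u')\ \geq\ 1\ >\ 0,
$$
which simultaneously gives $d(x,y')\neq 0$ and verifies condition (d2) that the two copies are at positive distance. Condition (d1) is automatic.

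The real content is the triangle inequality $d(p,r)\leq d(p,q)+d(q,r)$. The only nontrivial configurations are those in which $p,q,r$ are not all in the same copy; I would split into three cases.

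\emph{Case A.} $p=x$, $r=z'$ and $q=y$ lies in the same copy as $p$. For any $u\in X$,
$$
d_X(x,y)+d_X(y,u)+\delta(u,u')+d_X(u,z)\ \geq\ d_X(x,u)+\delta(u,u')+d_X(u,z),
$$
so taking the infimum over $u$ on the right yields $d_X(x,y)+d(y,z')\geq d(x,z')$. The mirror case with $q=z'$ in the copy of $r$ is handled by the same estimate, using symmetry of the formula.

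\emph{Case B.} $p=x$, $r=z$ and $q=y'$ lies in the opposite copy. Choosing approximate minimizers $u_1,u_2$ for $d(x,y')$ and $d(z,y')$,
$$
d(x,y')+d(z,y')\ \geq\ d_X(x,u_1)+d_X(u_1,y)+d_X(y,u_2)+d_X(u_2,z)\ \geq\ d_X(x,z),
$$
by repeated application of the triangle inequality for $d_X$; the $\delta$-terms are dropped using $\delta\geq 0$. Letting the approximation error tend to zero gives the desired bound.

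I expect no genuine obstacle: the infimum-over-paths structure of the formula makes all the triangle inequalities follow from splitting a candidate path for $d(p,r)$ at the intermediate point, and the only subtlety is the case $q=y'$, where the path for $d(x,z)$ must cross to $X'$ twice, which is why we need $\delta(u,u')+\delta(u',u)$ to be bounded below (guaranteed by $\delta\geq 1$, in fact by $\delta\geq 0$ here). The clean way to present the argument is to define the auxiliary metric via an infimum over admissible paths in $X\sqcup X'$ and check that the formula for $d(x,y')$ computes exactly that infimum; the triangle inequality is then tautological. I would not grind through the $\varepsilon$-bookkeeping and would simply indicate that infima can be replaced by approximate minimizers with an $\varepsilon$ that vanishes at the end.
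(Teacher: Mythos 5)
Your proof is correct and follows essentially the same route as the paper: the paper also reduces the verification to the two mixed triangle inequalities, proving the case with both endpoints in the same copy by splitting at two (approximate) minimizers and the case with endpoints in different copies by inserting the intermediate point into the path and passing to the infimum. Your additional checks of symmetry, positivity via $\delta\geq 1$, and conditions (d1)--(d2) are fine, just made explicit where the paper treats them as immediate.
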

\begin{proof}
It suffices to check the two triangle inequalities.

1. Let $x_1,x_2,y\in X$.
For any $u_1,u_2\in X$ we have
\begin{eqnarray*}
d_X(x_1,x_2)&\leq&d_X(x_1,u_1)+d_X(u_1,y)+d_X(y,u_2)+d_X(u_2,x_2)\\
&\leq&[d_X(x_1,u_1)+\delta(u_1,u'_1)+d_X(u_1,y)]+[d_X(y,u_2)+\delta(u_2,u'_2)+d_X(u_2,x_2)],
\end{eqnarray*}
hence, passing to the infimum over $u_1$ and $u_2$, we obtain
$$
d_X(x_1,x_2)\leq d(x_1,y')+d(x_2,y').
$$

2. Take $\varepsilon>0$, and let $\bar u_2$ satisfy 
$$
d(x_2,y')\geq [d_X(x_2,\bar u_2)+\delta(\bar u_2,\bar u'_2)+d_X(\bar u_2,y)]-\varepsilon.
$$
Then 
\begin{eqnarray*}
d(x_1,y')&\leq&[d_X(x_1,\bar u_2)+\delta(\bar u_2,\bar u'_2)+d_X(\bar u_2,y)]\\
&\leq& d_X(x_1,x_2)+d_X(x_2,\bar u_2)+\delta(\bar u_2,\bar u'_2)+d_X(\bar u_2,y)\\
&\leq& d_X(x_1,x_2)+d(x_2,y')+\varepsilon. 
\end{eqnarray*}

As $\varepsilon$ is arbitrary, we conclude that
$$
d(x_1,y')\leq d_X(x_1,x_2)+d(x_2,y').
$$

\end{proof}

Obviously, $d^*=d$. We shall call selfadjoint idempotents {\it projections}.

\begin{lem}\label{proj1}
$[d]$ is a projection in $M(X)$ for any function $\delta$.

\end{lem}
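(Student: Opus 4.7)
The symmetry of the defining formula makes selfadjointness immediate: $d(x,y')=\inf_{u}[d_X(x,u)+\delta(u,u')+d_X(u,y)]=d(y,x')$, so $d^*=d$. Hence the only content of the lemma is that $[d]$ is an idempotent, which by Theorem \ref{projections} is equivalent to exhibiting $\alpha\geq 0$, $\beta\geq 1$ with
\eQ{-\alpha+\tfrac{1}{\beta} d(x,x')\leq d(x,X')\quad\text{for all }x\in X.}

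The plan is to compute $d(x,X')$ explicitly and compare it with $d(x,x')$. Observing that
\eQ{d(x,X')=\inf_{y\in X}\inf_{u\in X}\bigl[d_X(x,u)+\delta(u,u')+d_X(u,y)\bigr],}
we see that for each fixed $u$ the inner infimum over $y$ is attained (with value $0$) by taking $y=u$, so
\eQ{d(x,X')=\inf_{u\in X}\bigl[d_X(x,u)+\delta(u,u')\bigr].}
On the other hand, setting $y=x$ in the definition gives
\eQ{d(x,x')=\inf_{u\in X}\bigl[2d_X(x,u)+\delta(u,u')\bigr].}

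Now I would use the hypothesis $\delta\geq 1$ (so in particular $\delta\geq 0$) in the harmless form $\delta(u,u')\leq 2\delta(u,u')$. This yields, for every $u\in X$,
\eQ{2d_X(x,u)+\delta(u,u')\leq 2\bigl[d_X(x,u)+\delta(u,u')\bigr],}
so taking infima gives $d(x,x')\leq 2\, d(x,X')$. Thus $\alpha=0$, $\beta=2$ verifies the condition of Theorem \ref{projections}, establishing $[d^2]_q=[d]_q$ and therefore also $[d^2]_c=[d]_c$ since quasi-equivalence is finer than coarse equivalence. Combined with $d^*=d$, this shows $[d]$ is a projection in both $M^q(X)$ and $M^c(X)$.

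There is no real obstacle here: the only subtlety is recognizing that the infimum defining $d(x,X')$ collapses to the one-parameter expression $\inf_u[d_X(x,u)+\delta(u,u')]$ via $y=u$, after which the comparison with $d(x,x')$ is automatic from $\delta\geq 1$.
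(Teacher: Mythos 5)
Your argument is correct and coincides with the paper's own proof: both reduce to the comparison $d(x,x')=\inf_u[2d_X(x,u)+\delta(u,u')]\leq 2\inf_u[d_X(x,u)+\delta(u,u')]=2d(x,X')$ (using only $\delta\geq 0$) and then invoke Theorem \ref{projections} with $\alpha=0$, $\beta=2$, passing to coarse equivalence via the quotient map. No differences worth noting.
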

\begin{proof}
It follows from Theorem \ref{projections} and from the estimate
\begin{eqnarray*}
2d(x,X')&=&2\inf_{y,u\in X}[d_X(x,u)+\delta(u,u')+d_X(u,y)]\\
&=&2\inf_{u\in X}[d_X(x,u)+\delta(u,u')]\\
&\geq& \inf_{u\in X}[2d_X(x,u)+\delta(u,u')]=d(x,x')
\end{eqnarray*}
that $[d]_q$ is a projection in $M^q(X)$. As the canonical map $M^q(X)\to M^c(X)$ is a homomorphism, $[d]_c$ is a projection in $M^c(X)$ as well. 

\end{proof}

Given an expanding sequence $\mathcal A=\{A_n\}_{n\in\mathbb N}$, define the function
$\delta=\delta_{\mathcal A}$ on $X$ by $\delta_{\mathcal A}(u,u')=1$ if $u\in A_1$, and $\delta_{\mathcal A}(u,u')=n+1$ if $u\in A_{n+1}\setminus A_n$, $n\in\mathbb N$. 
%We shall write $\delta$ instead of $\delta_{\mathcal A}$ when there is no ambiguity for $\mathcal A$.
For the expanding sequence $\mathcal A$, set
\begin{equation}\label{inf1}
d_\mathcal A(x,y')=\inf_{u\in X}[d_X(x,u)+\delta_\mathcal A(u,u')+d_X(u,y)].
\end{equation}

The following statement gives the description of projections in $M^q(X)$.

\begin{thm}\label{equiv}
Let $d\in\mathcal M(X)$ be a selfadjoint metric such that $[d]_q$ is a projection in $M^q(X)$. Set $A_n=\{x\in X:d(x,x')\leq n\}$. Then $\mathcal A=\{A_n\}_{n\in\mathbb N}$ is an expanding sequence, and $d\sim_q d_\mathcal A$. 

\end{thm}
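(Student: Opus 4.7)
The plan is to split the theorem into two claims — that $\mathcal A=\{A_n\}$ satisfies (e1)--(e2), and that $d\sim_q d_\mathcal A$ — and handle them separately.

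For the expanding-sequence part, (e1) is immediate because for any fixed $x_0\in X$ the number $d(x_0,x_0')$ is finite, so $x_0\in A_n$ once $n$ is large enough. For (e2) I would take $y\in N_{1/2}(A_n)$, pick $x\in A_n$ with $d_X(x,y)\le 1/2$, and chain $d$ through the auxiliary points $x\in X$ and $x'\in X'$:
\[
d(y,y')\le d_X(y,x)+d(x,x')+d_X(x,y)\le \tfrac12+n+\tfrac12=n+1,
\]
so $y\in A_{n+1}$, as required.

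For the equivalence $d\sim_q d_\mathcal A$, the strategy is to pass through Proposition \ref{xx'}. Both $d$ and $d_\mathcal A$ are selfadjoint projections in $M^q(X)$ — the former by hypothesis, the latter by Lemma \ref{proj1} — so it suffices to establish quasi-equivalence of the diagonal functions $x\mapsto d(x,x')$ and $x\mapsto d_\mathcal A(x,x')$. The key observation is that the very choice $A_n=\{x:d(x,x')\le n\}$ forces the pointwise inequalities $d(u,u')\le \delta_\mathcal A(u,u')\le d(u,u')+1$ at every $u\in X$. Taking $u=x$ in the infimum defining $d_\mathcal A(x,x')$ gives the upper bound $d_\mathcal A(x,x')\le \delta_\mathcal A(x,x')\le d(x,x')+1$. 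For the lower bound, the triangle inequality for $d$ combined with $\delta_\mathcal A(u,u')\ge d(u,u')$ yields
\[
d(x,x')\le 2 d_X(x,u)+d(u,u')\le 2 d_X(x,u)+\delta_\mathcal A(u,u')
\]
for every $u\in X$; taking infimum over $u$ gives $d(x,x')\le d_\mathcal A(x,x')$. Thus $d(x,x')\le d_\mathcal A(x,x')\le d(x,x')+1$, which is quasi-equivalence with constants $\alpha=1$, $\beta=1$.

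The main thing to get right is the availability of Proposition \ref{xx'}: without it one would be forced to compare the full two-variable metrics $d$ and $d_\mathcal A$ directly, which is awkward because $d$ is given only abstractly. The projection hypothesis on $d$ enters the argument only through that proposition; no quantitative use of the explicit $\alpha,\beta$ of Theorem \ref{projections} is actually needed.
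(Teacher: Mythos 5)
Your proof is correct and follows essentially the same route as the paper: the same verification that $N_{1/2}(A_n)\subset A_{n+1}$, and the same reduction via Proposition \ref{xx'} (with Lemma \ref{proj1} supplying that $[d_\mathcal A]$ is a projection) to comparing the diagonal functions, where taking $u=x$ gives the upper bound and the triangle inequality gives the lower bound. Your pointwise sandwich $d(u,u')\le\delta_\mathcal A(u,u')\le d(u,u')+1$ just packages the paper's case analysis over $A_n\setminus A_{n-1}$ a bit more cleanly, yielding the same estimate $|d_\mathcal A(x,x')-d(x,x')|\le 1$.
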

\begin{proof}
If $x\in N_{1/2}(A_n)$ then $d_X(x,A_n)\leq 1/2$, hence 
$$
d(x,x')\leq \inf_{u\in A_n}[d_X(x,u)+d(u,u')+d_X(u,x)]\leq \inf_{u\in A_n}[2d_X(x,u)+n]\leq n+1, 
$$
hence the sequence $\mathcal A=\{A_n\}_{n\in\mathbb N}$ is an expanding sequence. 
By Proposition \ref{xx'}, it suffices to show that the functions $d(x,x')$ and $d_\mathcal A(x,x')$ are quasi-equivalent.

Let $x\in A_n\setminus A_{n-1}$ for some $n\in\mathbb N$. Then $n-1 < d(x,x')\leq n$. For $u\in X$ set $d_X(x,u)=r$.
Then $u\in A_{n+2r}\setminus A_{n-1-2r}$, hence $n-1-2r\leq \delta_\mathcal A(u,u')\leq n+2r$. Thus,
$$
d_\mathcal A(x,x')=\inf_{u\in X}[2d_X(x,u)+\delta_\mathcal A(u,u')]\geq 2r+n-1-2r=n-1.
$$
On the other hand, taking $u=x$ in the right hand side of (\ref{inf1}) instead of the infimum, we see that
$$
d_\mathcal A(x,x')\leq \delta_\mathcal A(x,x')\leq n.
$$ 
Thus, for $x\in X$ satisfying $n-1<d(x,x')\leq n$ we have
$n-1\leq d_\mathcal A(x,x')\leq n$.

\end{proof}

\begin{lem}\label{le2}
Let $d_1,d_2\in\mathcal M(X)$ be selfadjoint metrics, $A^{(i)}_n=\{x\in X:d_i(x,x')\leq n\}$, $i=1,2$. The following conditions are equivalent:
\begin{itemize}
\item[(a1)]
$[d_1]_q=[d_2]_q$;
\item[(a2)]
there exist $\alpha,\beta\in\mathbb N$ such that $A^{(1)}_n\subset A^{(2)}_{\beta n+\alpha}$ and $A^{(2)}_n\subset A^{(1)}_{\beta n+\alpha}$. 
\end{itemize}
Similarly, the following conditions are equivalent:
\begin{itemize}
\item[(b1)]
$[d_1]_c=[d_2]_c$;
\item[(b2)]
there exists a monotone function $\varphi$ on $[0,\infty)$ with $\lim_{t\to\infty}\varphi(t)=\infty$ such that $A^{(1)}_n\subset A^{(2)}_{\varphi(n)}$ and $A^{(2)}_n\subset A^{(1)}_{\varphi(n)}$.
\end{itemize}

\end{lem}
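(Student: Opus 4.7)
The strategy is to reduce both equivalences to Proposition~\ref{xx'}, which (in the present setting of selfadjoint idempotent metrics, as implicit in this section) identifies $[d_1]=[d_2]$ with equivalence of the diagonal functions $f_i(x):=d_i(x,x')$ as positive functions on $X$. Once this reduction is made, the lemma becomes a purely combinatorial statement translating between quasi- or coarse equivalence of $f_1,f_2$ and containment relations of their sublevel sets $A^{(i)}_n=\{x:f_i(x)\leq n\}$.

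For part (a), Proposition~\ref{xx'} gives that (a1) is equivalent to $f_1\sim_q f_2$. From the inequality $f_2\leq\alpha_0+\beta_0 f_1$ one reads off that $x\in A^{(1)}_n$ forces $f_2(x)\leq\alpha_0+\beta_0 n$, hence $x\in A^{(2)}_{\lceil\alpha_0\rceil+\lceil\beta_0\rceil n}$; the reverse containment is symmetric, giving (a2). Conversely, given (a2), I would apply the containment at $n=\lceil f_1(x)\rceil$ to get $f_2(x)\leq\beta n+\alpha\leq\beta f_1(x)+(\alpha+\beta)$, and symmetrically I would obtain the matching lower bound, so $f_1\sim_q f_2$.

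For part (b) the same scheme works with monotone $\psi$ in place of the affine bound. The forward direction uses monotonicity of $\psi$ in $f_2\leq\psi(f_1)$: $x\in A^{(1)}_n$ yields $f_2(x)\leq\psi(n)$, hence (b2), with $\varphi$ taken to be the pointwise maximum of $\lceil\psi\rceil$ over the two sides. For the converse, the upper bound $f_2(x)\leq\varphi(f_1(x)+1)$ follows immediately from (b2) applied at $n=\lceil f_1(x)\rceil$, so I set $\psi(t):=\varphi(t+1)$. The matching lower bound on $f_2$ requires inverting $\varphi$: the symmetric containment $A^{(2)}_m\subset A^{(1)}_{\varphi(m)}$ gives $f_1(x)\leq\varphi(f_2(x)+1)$, and applying the generalized inverse $\widehat\varphi(s):=\inf\{t\geq 0:\varphi(t)\geq s\}$ yields $f_2(x)\geq\widehat\varphi(f_1(x))-1$; the required monotonicity and unboundedness of $\widehat\varphi$ follow from those of $\varphi$.

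The main technical point is this generalized-inverse step in part (b): one must pass from a one-sided containment of sublevel sets back to a two-sided monotone sandwich between $f_1$ and $f_2$, and verify that the resulting lower-bound function is both monotone and divergent at infinity. Everything else reduces to routine bookkeeping with ceilings and the reduction to diagonal functions supplied by Proposition~\ref{xx'}.
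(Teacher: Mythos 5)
Your argument is correct, and since the paper's own proof of this lemma is just the word ``Obvious'', your write-up supplies exactly the details the author evidently had in mind: restrict to $y=x$ (and invoke Proposition \ref{xx'} for the converse direction) to reduce everything to the diagonal functions $f_i(x)=d_i(x,x')$, then translate quasi-/coarse equivalence of $f_1,f_2$ into the sublevel-set inclusions, with a generalized inverse of the monotone bound handling the coarse case. You are also right to flag that idempotency of the $d_i$ must be read into the hypotheses (it is needed for Proposition \ref{xx'}, and all applications of the lemma in the paper are to projections): for merely selfadjoint metrics the implications (a2)$\Rightarrow$(a1) and (b2)$\Rightarrow$(b1) can fail, e.g.\ on $X=\mathbb Z$ the reflection metric $d(x,y')=|x+y|+1$ and the zero metric $d_{x_0}(x,y')=|x|+1+|y|$ (with $x_0=0$) have identical diagonal functions $2|x|+1$ yet are not coarsely equivalent, as seen at the pairs $(n,-n)$.
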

\begin{proof}
Obvious.

\end{proof}

Recall that the zero element $\mathbf 0\in M(X)$ is represented by the metric $d_{x_0}$, $d_{x_0}(x,y')=[d_X(x,x_0)+1+d_X(x_0,y)]$, $x,y\in X$, with any fixed $x_0\in X$ \cite{M}. The expanding sequence for $d_{x_0}$ is given by $\mathcal E_{\{x_0\}}$. We denote by $B_r(x_0)$ the ball of radius $r$ centered at $x_0$.  

\begin{lem}\label{0} 
Let $\mathcal A=\{A_n\}$, $\mathcal D=\{D_n\}$, $n\in\mathbb N$.
The following are equivalent:
\begin{itemize}
\item[(c1)]
$[d_{\mathcal A}d_{\mathcal D}]_q=\mathbf 0$; 
\item[(c2)]
there exists $\beta\geq 1$, $\alpha\geq 0$ such that for any $n\in\mathbb N$, $A_n\cap D_n\subset B_{\beta n+\alpha}(x_0)$.
\end{itemize}
Similarly, the following are equivalent:
\begin{itemize}
\item[(d1)]
$[d_{\mathcal A}d_{\mathcal D}]_c=\mathbf 0$;
\item[(d2)]
for any $n\in\mathbb N$, the set $A_n\cap D_n$ is bounded.
\end{itemize}

\end{lem}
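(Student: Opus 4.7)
The plan is to reduce each biconditional to Proposition~\ref{xx'} by exhibiting $d_{\mathcal A}d_{\mathcal D}$ as a (selfadjoint) idempotent in $M(X)$, and then to estimate the diagonal value $f(x):=(d_{\mathcal A}d_{\mathcal D})(x,x')$ against $d_X(x,x_0)$. Since $[d_{\mathcal A}]$ and $[d_{\mathcal D}]$ are projections by Lemma~\ref{proj1}, and idempotents of an inverse semigroup commute, $[d_{\mathcal A}d_{\mathcal D}]$ is an idempotent and a direct check shows its diagonal value equals that of its adjoint. Proposition~\ref{xx'} (with $\mathbf 0$ represented by $d_{x_0}$, diagonal $2d_X(x,x_0)+1$) thus reduces (c1) and (d1) to the quasi-, respectively coarse-, equivalence of $f$ and $d_X(\cdot,x_0)$. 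Unwinding the composition and collapsing the inner infimum via the triangle inequality will give
\[
f(x)=\inf_{u,v\in X}\bigl[d_X(x,u)+\delta_{\mathcal D}(u)+d_X(u,v)+\delta_{\mathcal A}(v)+d_X(v,x)\bigr].
\]

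I will dispose of the easy halves by two special choices in the infimum. Taking $u=v=x_0$ yields $f(x)\le 2d_X(x,x_0)+\delta_{\mathcal D}(x_0)+\delta_{\mathcal A}(x_0)$, giving the upper bound of $f$ by an affine function of $d_X(\cdot,x_0)$ for both equivalences. Taking $u=v=x$ yields $f(x)\le 2n$ whenever $x\in A_n\cap D_n$; assuming (c1) or (d1), the presumed lower bound of $f$ by a quasi-, resp.\ coarse-, inverse of $d_X(\cdot,x_0)$ then forces $d_X(x,x_0)\le\beta n+\alpha$, resp.\ boundedness of $A_n\cap D_n$, giving (c1)$\Rightarrow$(c2) and (d1)$\Rightarrow$(d2) at once.

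For the converses I will start from $f(x)\le N$, pick $u,v$ nearly achieving the infimum, and note that $\delta_{\mathcal D}(u),\delta_{\mathcal A}(v)\le N+\varepsilon$ and $d_X(x,v),d_X(u,v)\le N+\varepsilon$. The crucial step is to iterate property (e2) in the form $N_{k/2}(D_m)\subset D_{m+k}$ to absorb $v$ into $D_m$ for some integer $m$ linear in $N$ (say $m\le 3N+2$); since also $v\in A_m$, this produces $v\in A_m\cap D_m$ within distance $N+\varepsilon$ of $x$. Under (c2), $v\in B_{\beta m+\alpha}(x_0)$ bounds $d_X(x,x_0)$ affinely in $N=f(x)$, supplying the missing quasi-lower bound. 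Under (d2), setting $R(m):=\sup\{d_X(v,x_0):v\in A_m\cap D_m\}<\infty$ (monotone in $m$) gives $d_X(x,x_0)\le N+R(3N+2)$, a monotone function of $N$ going to infinity, whose standard ``inversion'' yields the required coarse-lower bound witnessing (d1). The main obstacle is exactly this conversion of the analytic estimate $f(x)\le N$ into a combinatorial witness $v\in A_{O(N)}\cap D_{O(N)}$ near $x$ via iteration of (e2); the rest is bookkeeping in the quantitative definitions of quasi- and coarse equivalence.
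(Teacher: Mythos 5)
Your argument is correct, and the forward implications are the same as the paper's: evaluate the lower bound expressing $[d_{\mathcal A}d_{\mathcal D}]=\mathbf 0$ on the diagonal and take $u=v=x$ (the paper takes $u=y=v=x$) to get $2n\geq$ a quasi- (resp.\ coarse-) inverse of $d_X(x,x_0)$ whenever $x\in A_n\cap D_n$. For the converses you take a genuinely different route. The paper argues from the space side: for $x$ with $d_X(x,x_0)$ between $\beta n+\alpha$ and $\beta(n+1)+\alpha$ (resp.\ between consecutive values of the bounding function $m$), the hypothesis forces $x\notin A_n$ or $x\notin D_n$, then the estimate from the proof of Theorem~\ref{equiv} gives $d_{\mathcal A}(x,x')\geq n-1$, Lemma~\ref{proj1} converts this into $d_{\mathcal A}(x,X')\geq\frac{n-1}{2}$, hence the composite diagonal is at least $\frac{n-1}{2}$; the upper bound comes from the triangle inequality $d(x,x')\leq 2d_X(x,x_0)+d(x_0,x_0')$. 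You instead argue from the metric side: from $f(x)\leq N$ you extract near-minimizers $u,v$, absorb $v$ into $A_m\cap D_m$ with $m=O(N)$ by iterating (e2), and then (c2)/(d2) bound $d_X(x,x_0)$ by an affine (resp.\ monotone finite-valued) function of $f(x)$, which inverts to the required lower bound; combined with the trivial upper bound from $u=v=x_0$ and Proposition~\ref{xx'} (which the paper also uses, implicitly, when it passes from equivalence of diagonal values to equivalence of the idempotent metrics), this yields (c1)/(d1). So the two converses are contrapositive formulations of essentially the same estimate: yours costs a slightly worse constant (the factor $3$ from absorption) but produces an explicit witness point in $A_m\cap D_m$ near $x$, while the paper reuses the ready-made inequalities of Theorem~\ref{equiv} and Lemma~\ref{proj1}. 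One caveat applies equally to both proofs: the iteration $N_{k/2}(D_m)\subset D_{m+k}$ (your absorption step, and likewise the bound $d_{\mathcal A}(x,x')\geq n-1$ for $x\notin A_n$ invoked by the paper for an arbitrary expanding sequence) is not a formal consequence of (e2) alone in a general metric space, since points at distance $r$ need not be reachable by $\tfrac12$-steps; it is legitimate under the paper's standing implicit assumption (already used when asserting $\cup_n A_n=X$), so your proof stands on the same footing as the paper's.
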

\begin{proof}
If $[d_{\mathcal A}d_{\mathcal D}]_q=\mathbf 0$ then there exist $\beta\geq 1$, $\alpha\geq 0$ such that
\begin{eqnarray*}
d_{\mathcal A}d_{\mathcal D}(x,x')&=&\inf_{u,y,v\in X}[d_X(x,u)+\delta_{\mathcal A}(u,u')+d_X(u,y)+d_X(y,v)+\delta_{\mathcal D}(v,v')+d_X(v,x)]\\
&\geq& \frac{1}{\beta}[2d_X(x,x_0)+1]-\alpha=\frac{1}{\beta}d_{x_0}(x,x')-\alpha.
\end{eqnarray*}

Taking $u=y=v=x$, we get 
$$
\delta_{\mathcal A}(x,x')+\delta_{\mathcal D}(x,x')\geq \frac{2}{\beta}d_X(x,x_0)-\alpha
$$
for any $x\in X$. 

Suppose that $x\in A_n\cap D_n$. Then $\delta_{\mathcal A}(x,x')\leq n$ and $\delta_{\mathcal D}(x,x')\leq n$, hence
$$
\frac{2}{\beta}d_X(x,x_0)-\alpha\leq 2n,
$$
$$
d_X(x,x_0)\leq \beta n+\alpha',
$$
where $\alpha'=\alpha\beta/2$,
thus $x\in B_{\beta n+\alpha'}(x_0)$.

Similarly, if $[d_{\mathcal A}d_{\mathcal D}]_c=\mathbf 0$ then there exists a monotone function $\varphi$ on $[0,\infty)$ with $\lim_{t\to\infty}\varphi(t)=\infty$ such that
$$
\delta_{\mathcal A}(x,x')+\delta_{\mathcal D}(x,x')\geq \varphi(d_X(x,x_0))
$$
for any $x\in X$. If $x\in A_n\cap D_n$ then $2n\geq \varphi(d_X(x,x_0))$, hence $d_X(x,x_0)\leq\varphi^{-1}(2n)$, and the set $A_n\cap D_n$ is bounded.

In the opposite direction, assume that there exist $\beta\geq 1$, $\alpha\geq 0$ such that $A_n\cap D_n\subset B_{\beta n+\alpha}(x_0)$ for any $n\in\mathbb N$. If 
$$
\beta n+\alpha\leq d_X(x,x_0)\leq \beta (n+1)+\alpha
$$ 
then $x$ cannot lie both in $A_n$ and $D_n$. Suppose $x\notin A_n$ holds.
Then, as in the proof of Theorem \ref{equiv}, $d_{\mathcal A}(x,x')\geq n-1$. The proof of Lemma \ref{proj1} shows that then $d_{\mathcal A}(x,X')\geq\frac{n-1}{2}$. Then $d_{\mathcal A}d_{\mathcal D}(x,x')\geq \frac{n-1}{2}$. The case when $x\notin D_n$ implies the same conclusion. Thus, for any $x\in X$ there is $n\in\mathbb N$ such that
$$
d_X(x,x_0)\leq\beta (n+1)+\alpha; \quad d_{\mathcal A}d_{\mathcal D}(x,x')\geq \frac{n-1}{2},
$$ 
which implies that for certain $\beta'\geq 1$, $\alpha'\geq 0$,
$$
d_{x_0}(x,x')\leq \frac{1}{\beta'}d_{\mathcal A}d_{\mathcal D}(x,x')-\alpha'
$$
for any $x\in X$.

The triangle inequality implies that $2d_X(x,x_0)+d(x_0,x'_0)\geq d(x,x')$ for any metric $d\in\mathcal M(X)$, hence $d_{x_0}(x,x')\geq \frac{1}{2}d(x,x')-\alpha''$, where $\alpha''=d(x_0,x'_0)$, thus $d_{x_0}\sim_q d_{\mathcal A}d_{\mathcal D}$.

Similarly, assume that the set $A_n\cap D_n$ is bounded for any $n\in\mathbb N$, i.e. there exists $x_0\in X$ and for any $n$ there exists $m(n)$ such that $A_n\cap D_n\subset B_{m(\frac{n-2}{2})}(x_0)$. Let $x\in X$ satisfy 
$$
\textstyle m(\frac{n-2}{2}) < d(x,x_0)\leq m(\frac{n-1}{2}).
$$
Then $x$ cannot lie both in $A_n$ and $D_n$, hence $d_{\mathcal A}d_{\mathcal D}(x,x')\geq\frac{n-1}{2}$. 
We may think of $m$ as an increasing function with $\lim_{t\to\infty}m(t)=\infty$, then
$$
\textstyle m^{-1}(d(x,x_0))\leq \frac{n-1}{2}\leq d_{\mathcal A}d_{\mathcal D}(x,x'),
$$
for any $x\in X$, hence $d_{\mathcal A}d_{\mathcal D}\sim_c d_{x_0}$.

\end{proof}

\section{Two types of projections}

By Theorem \ref{equiv}, any projection $e\in E(M(X))$ is equivalent to a projection of the form $d_{\mathcal A}$ with an expanding sequence $\mathcal A=\{A_n\}_{n\in\mathbb N}$. There are two possibilities for the expanding sequence $\mathcal A$: if for any $m\in\mathbb N$ there exist $n,k\in\mathbb N$ such that $A_m\subset N_k(A_n)$ then we say that $d_{\mathcal A}$ is of type I, otherwise it is of type II. By Lemma \ref{le2}, the type is well-defined, i.e. does not depend on the representative of the equivalence class $e$. If $[d]_c$ is of type I then there exists $A\subset X$ such that $d$ is coarsely equivalent to the metric $b_A\in\mathcal M(X)$ defined by $b_A(x,y')=d_X(x,A)+1+d_X(y,A)$, $x,y\in X$.

Note that the product of projections of the same type need not be a projection of the same type. For type II this is easy: one can take any two projections of type II with product $\mathbf 0$ (cf. Lemma \ref{0}). For the product of type I projections see Example \ref{typeI}. Nevertheless, we can show that at least some algebraic property holds for type I projections.

\begin{lem}\label{A-B}
Let $d\in\mathcal M(X)$, $A\subset X$, $d^*d\sim_c d_{\mathcal E_A}$. Then there exists $B\subset X$ such that $dd^*\sim_c d_{\mathcal E_B}$.

\end{lem}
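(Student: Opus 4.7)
My strategy is to translate the statement into a claim about functions on $X$ using Proposition \ref{xx'}, then construct $B$ as the ``image'' of $A$ under the partial correspondence encoded by $d$. A direct unfolding of the composition gives
$$
d^*d(x,x')=2\,d(x,X'),\qquad dd^*(x,x')=2\,d(X,x'),
$$
and $b_A(x,x')=2d_X(x,A)+1$. Since $b_A\sim_c d_{\mathcal E_A}$, the hypothesis $d^*d\sim_c d_{\mathcal E_A}$ is, via Proposition \ref{xx'}, equivalent to saying that $x\mapsto d(x,X')$ and $x\mapsto d_X(x,A)$ are coarsely equivalent functions on $X$. By the same proposition it will suffice to produce $B\subset X$ such that $x\mapsto d(X,x')$ and $x\mapsto d_X(x,B)$ are coarsely equivalent, for then $dd^*\sim_c b_B\sim_c d_{\mathcal E_B}$.

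To construct $B$: the coarse equivalence evaluated at points of $A$ (where $d_X(\cdot,A)=0$) yields a constant $M_0$ with $d(a,X')\leq M_0$ for every $a\in A$. For each $a\in A$ choose $y_a\in X$ with $d(a,y_a')\leq M_0+1$ and put $B:=\{y_a:a\in A\}$. I will then show that both $d(X,x')$ and $d_X(x,B)$ are coarsely equivalent to the auxiliary function $h(x):=d(A,x')$.

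The equivalence $d(X,x')\sim_c h(x)$ is the only place the hypothesis is genuinely used. One direction, $d(X,x')\leq h(x)$, is immediate from $A\subset X$; for the other, given $z\in X$ with $d(z,x')$ approximating $d(X,x')$, the bound $d(z,X')\leq d(z,x')$ together with the hypothesis produces $a\in A$ with $d_X(a,z)$ controlled by a monotone function of $d(z,x')$, and the triangle inequality yields $d(a,x')\leq d_X(a,z)+d(z,x')$. The equivalence $h(x)\sim_c d_X(x,B)$ is then formal, exploiting the identity $d(x',y_a')=d_X(x,y_a)$: from $d(a,x')\leq d(a,y_a')+d(y_a',x')\leq M_0+1+d_X(x,y_a)$ one gets $h(x)\leq M_0+1+d_X(x,B)$, and from $d_X(x,y_a)=d(x',y_a')\leq d(a,x')+d(a,y_a')$ applied to $a$ nearly realizing $h(x)$ one gets $d_X(x,B)\leq h(x)+M_0+2$.

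The main obstacle is thus the first equivalence, which is the only step actually leveraging the type I hypothesis and requires carefully transporting quantitative control from $d(\cdot,X')$ to $d(\cdot,x')$ via an approximating point $z$; everything else is triangle-inequality bookkeeping once $B$ is defined. One should tacitly assume $A\neq\emptyset$, else $d^*d$ is the zero projection and the claim is trivial with $B=\emptyset$.
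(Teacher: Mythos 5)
Your proof is correct, but it is packaged differently from the paper's. The paper never leaves the expanding-sequence formalism: it takes $A_n=\{x\in X:d(x,X')\leq n\}$ and $B_n=\{x\in X:d(x',X)\leq n\}$ (the expanding sequences of $\frac{1}{2}d^*d$ and $\frac{1}{2}dd^*$), rewrites the hypothesis as $A\subset A_n\subset N_{\varphi(n)}(A)$ for $n\geq n_0$, and shows that every $B_n$ lies in a bounded neighbourhood of $B_{n_0}$, so that $B=B_{n_0}$ --- a sublevel set of $d(\cdot\,',X)$, requiring no choices --- does the job via Lemma \ref{le2}. You instead work at the level of diagonal functions through Proposition \ref{xx'} and take $B$ to be the set of partners $y_a$ of the points $a\in A$, proving $d(X,x')\sim_c d(A,x')\sim_c d_X(x,B)$. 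The geometric core is the same transport step as in the paper's induction (a point with $d(X,x')$ controlled is matched to a near-realizing $z$, pulled to some $a\in A$ by the hypothesis, then pushed across using $d|_{X'}=d_X$), but your version makes the estimates uniform from the start, so no induction is needed, and it yields an explicit $B$; the price is the extra bookkeeping you correctly supply or assert ($d^*d(x,x')=2d(x,X')$, $dd^*(x,x')=2d(X,x')$, $b_B\sim_c d_{\mathcal E_B}$, and the fact that $d^*d$, $dd^*$ are selfadjoint idempotent representatives so that Proposition \ref{xx'} applies --- facts the paper also uses silently). One small correction: the closing caveat about $A=\emptyset$ is unnecessary and slightly off --- the hypothesis already forces $A\neq\emptyset$, since $d(\cdot,X')$ is finite while $d_X(\cdot,\emptyset)\equiv\infty$ and $d_{\mathcal E_\emptyset}$ is not defined, and the zero projection corresponds to a bounded nonempty $A$ rather than to the empty set.
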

\begin{proof}
By definition, $d^*d(x,x')=2d(x,X')$ and $dd^*(x,x')=2d(x',X)$ for any $x\in X$. Let 
$$
A_n=\{x\in X:d(x,X')\leq n\},\quad B_n=\{x\in X:d(x',X)\leq n\}. 
$$
Then $\{A_n\}$ and $\{B_n\}$ are expanding sequences for $\frac{1}{2}d^*d$ and $\frac{1}{2}dd^*$ respectively. 
As $d^*d\sim_c d_{\mathcal E_A}$, there exists $n_0\in\mathbb N$ and a monotone function $\varphi$ with $\lim_{n\to\infty}\varphi(n)=\infty$ such that $A\subset A_n\subset N_{\varphi(n)}(A)$ for any $n\geq n_0$.

We shall prove by induction that for each $n\in\mathbb N$ there exists $k\in\mathbb N$ such that $B_n\subset N_k(B_{n_0})$, $n\geq n_0$.
This is obvious for $n=n_0$, so assume that this holds for $n$ and let us show that this holds for $n+1$. Let $x\in B_{n+1}$.
Then there exists $y\in X$ such that $d(x',y)\leq n+2$. By definition, $y\in A_{n+2}$, and there exists $m\in\mathbb N$ such that $A_{n+2}\subset N_m(A)$. Then there exists $z\in A\subset A_{n_0}$ such that $d_X(y,z)\leq m+1$. Then there exists $t\in X$ such that $d(z',t)\leq n_0$, which means that $t\in B_{n_0}$. Then 
$$
d_X(x,t)\leq d(x,y')+d_X(y',z')+d(z',t)\leq n+2+m+1+n_0=k, 
$$
i.e. $x\in N_k(B_{n_0})$.   

\end{proof}

\begin{remark}
It would be natural to ask if a stronger statement holds: if $e\in E(M(X))$ is a projection of type I then $ses^*$ is of type I for any $s\in M(X)$. The Example \ref{typeI} shows that this is not true: indeed, take $A,B\subset X$ such that $[d_{\mathcal E_A}][d_{\mathcal E_B}]$ is not of type I, and take $e=[d_{\mathcal E_A}]$, $s=s^*=[d_{\mathcal E_B}]$. As projections commute, we have $ses^*=se$.  

\end{remark}
 
Recall that there is a partial order on $\mathcal M(X)$: $d_1\preceq d_2$ if $d_1(x,y')\geq d_2(x,y')$ for any $x,y\in X$, and that this partial order passes to the quotients $M^{(\cdot)}(X)$. If $[d_1]$, $[d_2]$ are projections then $[d_1]\preceq[d_2]$ is equivalent to $[d_1][d_2]=[d_1]$.

It is clear that if $\mathcal A=\{A_n\}_{n\in\mathbb N}$ is an expanding sequence then $d_{\mathcal E_{A_n}}\preceq d_{\mathcal A}$ for any $n\in\mathbb N$.

\begin{lem}\label{compar}
Let $d_{\mathcal E_B}\preceq d_{\mathcal A}$. Then there exists $n\in\mathbb N$ such that $B\subset A_n$.

\end{lem}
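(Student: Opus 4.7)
The plan is to show the explicit bound $B\subset A_2$. First I would pin down $d_{\mathcal E_B}(x,x')$ for $x\in B$: since $x\in B\subset N_{1/2}(B)$, which is the first level of the expanding sequence $\mathcal E_B$, the definition gives $\delta_{\mathcal E_B}(x,x')=1$. Plugging $u=x$ into the defining infimum (\ref{inf1}) yields $d_{\mathcal E_B}(x,x')\leq 1$, and since $\delta\geq 1$ everywhere, equality holds: $d_{\mathcal E_B}(x,x')=1$.

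The hypothesis $d_{\mathcal E_B}\preceq d_{\mathcal A}$ means, by the definition of the partial order recalled just before the lemma, that $d_{\mathcal E_B}(u,v')\geq d_{\mathcal A}(u,v')$ pointwise; specialising at $(x,x')$ gives $d_{\mathcal A}(x,x')\leq 1$. I would then argue by contradiction, assuming $x\notin A_2$. By property (e2), $N_{1/2}(A_1)\subset A_2$, so $d_X(x,A_1)>1/2$. Now split the infimum defining $d_{\mathcal A}(x,x')$: if $u\in A_1$ then $\delta_{\mathcal A}(u,u')=1$ but $2d_X(x,u)>1$, while if $u\notin A_1$ then $\delta_{\mathcal A}(u,u')\geq 2$. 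In both cases $2d_X(x,u)+\delta_{\mathcal A}(u,u')\geq 2$, forcing $d_{\mathcal A}(x,x')\geq 2$ and contradicting $d_{\mathcal A}(x,x')\leq 1$. Hence $x\in A_2$, and since $x\in B$ was arbitrary, $B\subset A_2$, so $n=2$ works.

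I do not anticipate a real obstacle; the whole argument is a short unfolding of definitions, with the expanding property (e2) supplying the needed "far points have large $\delta$" estimate. The only points to watch are the direction of $\preceq$ (so that larger distance corresponds to smaller projection, giving $d_{\mathcal E_B}\geq d_{\mathcal A}$ pointwise) and the integer-valued nature of $\delta_{\mathcal A}$, which is what makes the dichotomy collapse cleanly to the lower bound $\geq 2$.
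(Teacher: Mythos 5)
Your proof is correct and takes essentially the paper's route: compute $d_{\mathcal E_B}(x,x')=1$ for $x\in B$, use the order $\preceq$ (reversed pointwise inequality) to bound $d_{\mathcal A}(x,x')$ on $B$, and conclude membership in a fixed $A_n$ --- your dichotomy $u\in A_1$ versus $u\notin A_1$ just makes explicit the step the paper only asserts (that a bound on $d_{\mathcal A}(x,x')$ forces $x$ into some $A_n$), and thereby yields the sharper conclusion $B\subset A_2$. One caveat: the paper's proof is phrased so that it only needs $d_{\mathcal A}(x,x')$ to be bounded on $B$ by \emph{some} $n$, which also covers the hypothesis taken at the level of equivalence classes rather than literal metrics; under that weaker reading your explicit constant $2$ must be relaxed, but the same argument with $d_{\mathcal A}(x,x')\leq C$ on $B$ still gives $B\subset A_{\lceil C\rceil+1}$, so nothing essential changes.
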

\begin{proof}
If $x\in B$ then $d_{\mathcal E_B}(x,x')=d_X(x,B)+1=1$, so, if $d_{\mathcal E_B}\preceq d_{\mathcal A}$ then the map $x\mapsto d_{\mathcal A}(x,x')$ is bounded on $B$ from above by some $n\in\mathbb N$, i.e. $d_{\mathcal A}(x,x')\leq n$. But this shows that $x\in A_n$, hence $B\subset A_n$. 

\end{proof}

\section{Description of projections in terms of functions on $X$}

Let $\mathcal C_m(X)$ be the set of all continuous functions $f$ on $X$ taking values in $[0,\infty)$ such that
\begin{itemize}
\item[(f1)]
there exists $\varepsilon>0$ such that $f(x)\geq\varepsilon$ for any $x\in X$;
\item[(f2)]
$|f(x)-f(y)|\leq 2d_X(x,y)$ for any $x,y\in X$.

\end{itemize}

We have two kinds of equivalence on $\mathcal C_m(X)$, quasi-equivalence and coarse equivalence. Let $C_m^q(X)$ (resp., $C_m^c(X)$) be the set of quasi-equivalence (resp., of coarse equivalence) classes of $\mathcal C_m(X)$. If the kind of equivalence doesn't matter then we write just $C_m(X)$ for any of them.

We say that $[f]_q\preceq[g]_q$ if there exist $\alpha\geq 0$, $\beta\geq 1$ such that $g(x)\leq \beta f(x)+\alpha$ for any $x\in X$. This makes $C_m^q(X)$ a partially ordered set. Similarly, we say that $[f]_c\preceq[g]_c$ if there exists a continuous function $\psi$ on $[0,\infty)$ with $\lim_{t\to\infty}\psi(t)=\infty$ such that $g(x)\leq  \psi(f(x))$ for any $x\in X$. Clearly, $[f]_q\preceq[g]_q$ implies $[f]_c\preceq[g]_c$, and $C_m^c(X)$ is a partially ordered set as well.

Set $f\land g(x)=\max(f(x),g(x))$ and $f\lor g(x)=\min(f(x),g(x))$, respectively. 

%Note that $f\land g\sim f+g$. 

\begin{lem}
If $f,g\in\mathcal C_m(X)$ then $f\lor g,f\land g\in\mathcal C_m(X)$.

\end{lem}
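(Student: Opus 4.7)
The plan is to verify the three defining conditions for $\mathcal{C}_m(X)$ separately for the pointwise max and pointwise min. Continuity is immediate from the identities $\max(a,b) = \tfrac12(a+b+|a-b|)$ and $\min(a,b) = \tfrac12(a+b-|a-b|)$, so both $f\land g$ and $f\lor g$ are continuous whenever $f,g$ are.

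For (f1), if $f(x)\geq \varepsilon_f$ and $g(x)\geq \varepsilon_g$ for all $x$, with $\varepsilon_f,\varepsilon_g>0$, then pointwise $f\land g(x)=\max(f(x),g(x))\geq \max(\varepsilon_f,\varepsilon_g)>0$ and $f\lor g(x)=\min(f(x),g(x))\geq \min(\varepsilon_f,\varepsilon_g)>0$, so both functions are bounded below by a positive constant.

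The only substantive step is (f2), which reduces to the elementary inequalities
$$|\max(a,b)-\max(c,d)|\leq \max(|a-c|,|b-d|),\qquad |\min(a,b)-\min(c,d)|\leq \max(|a-c|,|b-d|),$$
valid for all real $a,b,c,d$. I would prove the first by symmetry: assuming without loss of generality $\max(a,b)\geq\max(c,d)$ and (say) $a\geq b$, one has $\max(a,b)-\max(c,d)=a-\max(c,d)\leq a-c\leq |a-c|$; the other cases are identical, and the min inequality follows by negation. Applying these with $a=f(x),\,b=g(x),\,c=f(y),\,d=g(y)$ and using (f2) for $f$ and $g$ gives
$$|(f\land g)(x)-(f\land g)(y)|\leq \max(|f(x)-f(y)|,|g(x)-g(y)|)\leq 2d_X(x,y),$$
and similarly for $f\lor g$.

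There is no real obstacle here; the lemma is a routine verification, the only nonobvious ingredient being the standard Lipschitz estimate for max and min above. The bookkeeping of the paper's slightly nonstandard convention ($\land$ denotes max, $\lor$ denotes min) must be respected but causes no difficulty, since max and min are treated symmetrically throughout.
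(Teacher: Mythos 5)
Your proof is correct and follows essentially the same route as the paper, which simply declares (f1) obvious and (f2) a direct calculation; your Lipschitz estimate $|\max(a,b)-\max(c,d)|\leq\max(|a-c|,|b-d|)$ (and its min counterpart) is exactly that direct calculation carried out, and you correctly respect the paper's convention that $\land$ is the pointwise maximum and $\lor$ the pointwise minimum.
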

\begin{proof}
The property (f1) for $f\lor g$ and for $f\land g$ is obvious. The property (f2) for them can be checked by direct calculation.

\end{proof}

\begin{lem}
Let $f'\sim f$. Then $f'\lor g\sim f\land g$ and $f'\lor g\sim f\land g$. 

\end{lem}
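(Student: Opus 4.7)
The statement as printed almost certainly contains a typographical error: taken literally, the two displayed relations are identical and both assert $f'\lor g\sim f\land g$, which fails in general. I will proceed under the natural reading that the lemma asserts that the operations $\lor$ and $\land$ descend to the quotient, i.e., $f'\lor g\sim f\lor g$ and $f'\land g\sim f\land g$. Recall from the definitions that, in this paper, $f\lor g=\min(f,g)$ and $f\land g=\max(f,g)$.

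I would handle the quasi-equivalence and coarse equivalence versions in parallel, the proof in each case reducing to a pointwise case analysis according to which of $f,g$ realizes the minimum or maximum. Consider first $\sim_q$: pick constants $\alpha\geq 0$, $\beta\geq 1$ with $-\alpha+\tfrac{1}{\beta}f\leq f'\leq \alpha+\beta f$. To bound $\min(f',g)\leq \alpha+\beta\min(f,g)$, split on $\min(f,g)$. If $\min(f,g)=f$, then $\min(f',g)\leq f'\leq \alpha+\beta f=\alpha+\beta\min(f,g)$; if instead $\min(f,g)=g$, then $\min(f',g)\leq g=\min(f,g)\leq \alpha+\beta\min(f,g)$. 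The reverse inequality $\min(f,g)\leq \alpha+\beta\min(f',g)$ follows by symmetry (the roles of $f$ and $f'$ in $\sim_q$ are interchangeable). The identical case split, with $\min$ replaced by $\max$, gives $f'\land g\sim_q f\land g$.

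For $\sim_c$, take monotone $\varphi,\psi:[0,\infty)\to[0,\infty)$ with infinite limits such that $\varphi(f)\leq f'\leq \psi(f)$. After replacing $\varphi$ by $\tilde\varphi(t):=\min(\varphi(t),t)$ and $\psi$ by $\tilde\psi(t):=\max(\psi(t),t)$ (still monotone with infinite limits), I may assume $\varphi(t)\leq t\leq \psi(t)$. Then the upper bound $\min(f',g)\leq \psi(\min(f,g))$ follows by the same split: if $f\leq g$, then $\min(f',g)\leq f'\leq \psi(f)=\psi(\min(f,g))$; if $g\leq f$, then $\min(f',g)\leq g\leq \psi(g)=\psi(\min(f,g))$. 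The lower bound $\varphi(\min(f,g))\leq \min(f',g)$ is analogous, using $\varphi(t)\leq t$ to bound $\varphi(\min(f,g))$ by $g$ in the case $g\leq f$, and using monotonicity of $\varphi$ to conclude $\varphi(\min(f,g))\leq \varphi(f)\leq f'$. Then the same pattern, exchanging $\min$ and $\max$ and the roles of the lower and upper envelopes, treats $f'\land g\sim_c f\land g$.

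There is no real obstacle; this is bookkeeping. The only place that requires a moment of thought is the coarse case, where without normalizing $\varphi,\psi$ so that they bracket the identity one cannot directly compare, e.g., $g$ with $\psi(g)$ in the branch where $g$ is the minimum but $f'$ is compared to $\psi(f)$. Once $\varphi(t)\leq t\leq \psi(t)$ is arranged, every case collapses to a one-line estimate.
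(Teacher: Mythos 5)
Your proposal is correct, and you rightly diagnose the typo in the statement (the intended claim is $f'\lor g\sim f\lor g$ and $f'\land g\sim f\land g$); your pointwise case analysis is essentially the paper's own argument, which for quasi-equivalence estimates $\max(f'(x),g(x))\leq\max(\beta f(x)+\alpha,g(x))\leq\beta\max(f(x),g(x))+\alpha$ and declares the remaining cases similar. Your explicit normalization $\varphi(t)\leq t\leq\psi(t)$ in the coarse case is precisely the small point hidden in the paper's ``similar,'' and you handle it correctly.
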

\begin{proof}
As the two statements are similar, we check only the first one, and only for quasi-equivalence.
If $f'(x)\leq\beta f(x)+\alpha$ for any $x\in X$ then 
\begin{eqnarray*}
\max(f'(x),g(x))&\leq& \max(\beta f(x)+\alpha,g(x))\leq \max(\beta f(x)+\alpha,\beta g(x)+\alpha)\\
&=&\beta\max(f(x),g(x))+\alpha.
\end{eqnarray*}

\end{proof}

Thus, $[f]\lor [g]=[f\lor g]$ and $[f]\land [g]=[f\land g]$ are well-defined in $C_m(X)$ (both for quasi-equivalence and for coarse equivalence).  

It is clear that $[f]\land[g]\preceq [f],[g]\preceq [f]\lor[g]$.

\begin{lem}
If $h\in\mathcal C_m(X)$ satisfies $[f],[g]\preceq [h]$ and $[h]\preceq [f]\lor [g]$ then $[h]=[f]\lor[g]$.
If $h\in\mathcal C_m(X)$ satisfies $[h]\preceq [f],[g]$ and $[f]\land [g]\preceq [h]$ then $[h]=[f]\land[g]$.

\end{lem}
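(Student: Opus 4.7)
The plan is to translate each hypothesis directly into the pointwise inequalities provided by the definitions of $\preceq$ and of $\sim$, and then read off the required equivalence. I will sketch the first statement in detail; the second is symmetric, swapping the roles of $\lor$ and $\land$ (equivalently of $\min$ and $\max$).

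Recalling the convention $f\lor g=\min(f,g)$, the hypothesis $[f]_q,[g]_q\preceq[h]_q$ supplies constants $\alpha\geq 0$, $\beta\geq 1$ (take the worse of the two pairs) so that $h(x)\leq \beta f(x)+\alpha$ and $h(x)\leq \beta g(x)+\alpha$ for every $x\in X$. Taking the pointwise minimum of the right-hand sides yields $h(x)\leq \beta(f\lor g)(x)+\alpha$. The remaining hypothesis $[h]_q\preceq[f]_q\lor[g]_q=[f\lor g]_q$ is, by definition, the reverse estimate $(f\lor g)(x)\leq \beta' h(x)+\alpha'$. The two inequalities together say $h\sim_q f\lor g$, hence $[h]_q=[f]_q\lor[g]_q$.

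For coarse equivalence the argument is identical except that $h\leq \beta f+\alpha$ is replaced by $h\leq \psi_1(f)$, and similarly for $g$, with $\psi_i$ monotone and tending to infinity. The one point requiring a line of thought is combining the two bounds into a single one of the form $h\leq\psi(f\lor g)$; for this I take $\psi=\max(\psi_1,\psi_2)$ and note that at each $x$ at least one of $f(x)$, $g(x)$ equals $(f\lor g)(x)$, so $\min(\psi_1(f(x)),\psi_2(g(x)))\leq \psi((f\lor g)(x))$. Combined with the reverse estimate $f\lor g\leq \psi'(h)$ coming from $[h]_c\preceq[f\lor g]_c$, this gives $h\sim_c f\lor g$.

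The second statement is handled the same way: $[h]\preceq[f],[g]$ supplies $f\leq \beta h+\alpha$ and $g\leq \beta h+\alpha$, hence $(f\land g)=\max(f,g)\leq \beta h+\alpha$; the hypothesis $[f]\land[g]\preceq[h]$ is the reverse inequality $h\leq \beta'(f\land g)+\alpha'$, and the two combine to $h\sim f\land g$. There is no substantive obstacle here — the content is essentially an unpacking of the definitions — and the only mild subtlety is the coarse-case passage from two monotone functions to one, treated above.
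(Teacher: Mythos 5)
Your proof is correct and takes essentially the same route as the paper: translate each hypothesis $\preceq$ into its defining pointwise estimate, combine the two upper bounds on $h$ into a single bound by $\min(f,g)$ (resp.\ $\max(f,g)$ for the second statement), and conclude the two-sided equivalence $h\sim f\lor g$. The only difference is that you write out the coarse case explicitly (merging $\psi_1,\psi_2$ into $\psi=\max(\psi_1,\psi_2)$), which the paper omits as being analogous; that detail is handled correctly.
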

\begin{proof}
Once again, it suffices to prove only the first statement and only for quasi-equivalence. By assumption, there exist $\alpha\geq 0$, $\beta\geq 1$ such that
\begin{equation}\label{11}
h(x)\leq\beta f(x)+\alpha,\quad h(x)\leq\beta g(x)+\alpha,
\end{equation}
\begin{equation}\label{12}
\min(f(x),g(x))\leq\beta h(x)+\alpha 
\end{equation}
For any $x\in X$. 
Then it follows from (\ref{11}) that $h(x)\leq\beta\min(f(x),g(x))+\alpha$, and this, together with (\ref{12}), implies $h\sim_q f\lor g$. 

\end{proof}

Thus, $C_m^q(X)$ and $C_m^c(X)$ are lattices.

Let $d\in\mathcal M(X)$. Set $F(d)=f$, where $f(x)=d(x,x')$. Clearly, this determines  maps $M^q(X)\to C_m^q(X)$ and $M^c(X)\to C_m^c(X)$.  
Consider the restriction of these maps to $E(M^q(X))$ and $E(M^c(X))$, 
\begin{equation}\label{F1}
F^q:E(M^q(X))\to C_m^q(X);\quad F^c:E(M^c(X))\to C_m^c(X).
\end{equation}

\begin{thm}\label{functions}
The maps $F^q$ and $F^c$ (\ref{F1})  are bijections.

\end{thm}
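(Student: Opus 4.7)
The plan is to verify well-definedness (together with $F(\mathcal M(X)) \subset \mathcal C_m(X)$), injectivity, and surjectivity of $F^q$ and $F^c$ separately, treating both cases in parallel.

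For any $d \in \mathcal M(X)$, the function $f(x) = d(x,x')$ is bounded below by $\inf_{x,y \in X} d(x,y') > 0$ (condition (d2)), and two applications of the triangle inequality yield $|f(x) - f(y)| \leq 2 d_X(x,y)$; thus $f \in \mathcal C_m(X)$. Well-definedness at the level of equivalence classes follows by specializing $y = x$ in the defining inequalities of $\sim_q$ and $\sim_c$. Injectivity is then immediate from Proposition~\ref{xx'}: if $F(d_1) \sim F(d_2)$ for projections $d_1, d_2$, then $d_1 \sim d_2$.

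For surjectivity, given $f \in \mathcal C_m(X)$ I would set $A_n = \{x \in X : f(x) \leq n\}$. Property (f2), together with continuity of $f$, forces $\mathcal A = \{A_n\}$ to be an expanding sequence: if $d_X(x, A_n) \leq 1/2$, then choosing $y \in A_n$ arbitrarily close to $x$ gives $f(x) \leq f(y) + 1 \leq n+1$, so $x \in A_{n+1}$. By Lemma~\ref{proj1}, $d_\mathcal A$ is a projection in $M(X)$. The theorem then reduces to the pointwise bound $f(x) \leq d_\mathcal A(x,x') \leq f(x) + 1$. Unwinding the definition of $\delta_\mathcal A$ via $u \in A_m \Leftrightarrow f(u) \leq m$ gives $f(u) \leq \delta_\mathcal A(u,u') \leq f(u)+1$ for every $u \in X$; taking $u = x$ in the infimum defining $d_\mathcal A(x,x')$ yields the upper bound, while the Lipschitz condition (f2) gives $\delta_\mathcal A(u,u') \geq f(u) \geq f(x) - 2 d_X(x,u)$, so every term $2 d_X(x,u) + \delta_\mathcal A(u,u')$ in the infimum is at least $f(x)$.

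The main observation is that the Lipschitz constant $2$ in (f2) exactly matches the factor $2$ appearing in $d_\mathcal A(x,x') = \inf_u [2 d_X(x,u) + \delta_\mathcal A(u,u')]$; once noted, the lower bound is essentially automatic, and the conclusion is in fact stronger than needed, yielding a uniform additive control $|d_\mathcal A(x,x') - f(x)| \leq 1$ rather than mere quasi- or coarse equivalence.
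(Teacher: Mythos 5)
Your proof is correct and follows essentially the same route as the paper: injectivity via Proposition~\ref{xx'}, and surjectivity by forming the expanding sequence $A_n=\{x: f(x)\leq n\}$ and showing $|d_{\mathcal A}(x,x')-f(x)|\leq 1$. The only difference is cosmetic: you re-derive this estimate directly from the Lipschitz condition (f2) (and verify well-definedness explicitly), whereas the paper cites the computation already made in the proof of Theorem~\ref{equiv}.
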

\begin{proof}
As the two cases are similar, we give the proof only for the case of quasi-equivalence. Injectivity follows from Proposition \ref{xx'}, so it remains to check surjectivity. Let $f\in\mathcal C_m(X)$. Set $A_n=\{x\in X:f(x)\leq n\}$. Then $\mathcal A=\{A_n\}_{n\in\mathbb N}$ is an expanding sequence of subsets. Indeed, if $d_X(x,A_n)\leq 1/2$ then there exists $y\in A_n$ such that $d_X(x,y)\leq 1/2$. We have $f(y)\leq n$, hence $f(x)\leq f(y)+2d_X(x,y)\leq n+1$, i.e. $x\in A_{n+1}$.

We claim that $F^q([d_{\mathcal A}]_q)=[f]_q$. This follows from the proof of Theorem \ref{equiv}, where it was shown that
for $x\in X$ satisfying $n-1<f(x)\leq n$ we have $n-1\leq d_\mathcal A(x,x')\leq n$, i.e. $|d_\mathcal A(x,x')-f(x)|\leq 1$.

\end{proof}

It is easy to see that if $s\preceq t$ in $E(M(X))$ then $F(s)\preceq F(t)$, where $F$ is either $F^q$ or $F^c$. 

Let $p,q\in E(M(X))$ be projections, and let $d_1,d_2\in\mathcal M(X)$, $[d_1]=p$, $[d_2]=q$, $d^*_i=d_i$, $i=1,2$.
For $x,y\in X$, set 
$$
b(x,y')=\max(d_1(x,y'),d_2(x,y')), 
$$
$$
c(x,y')=\inf_{u\in X}[d_X(x,u)+\min(d_1(u,u'),d_2(u,u'))+d_X(u,y)].
$$

\begin{lem}\label{lattice1}
$F([b])=F(p)\land F(q)$, $F([c ])=F(p)\lor F(q)$.

\end{lem}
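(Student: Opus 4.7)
The plan is to verify that both $b$ and $c$ lie in $\mathcal M(X)$ and then compute the diagonal values $b(x,x')$ and $c(x,x')$ directly, matching them against the lattice operations in $C_m(X)$. Recall the paper's convention $\phi\land\psi=\max(\phi,\psi)$ and $\phi\lor\psi=\min(\phi,\psi)$ (the order on $C_m(X)$ is reversed with respect to function size).

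First I would check that $b=\max(d_1,d_2)$ lies in $\mathcal M(X)$: (d1) and (d2) are immediate from the corresponding properties of $d_1$ and $d_2$, and both cross-triangle inequalities follow from those for $d_1,d_2$ together with the elementary bound $\max(a_1+c_1,a_2+c_2)\leq \max(a_1,a_2)+\max(c_1,c_2)$. For $c$, Lemma \ref{triangle1} applies directly with $\delta(u,u'):=\min(d_1(u,u'),d_2(u,u'))$, so $c\in\mathcal M(X)$.

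The identity for $b$ is then immediate: $F([b])(x)=b(x,x')=\max(d_1(x,x'),d_2(x,x'))=\max(F(p)(x),F(q)(x))=(F(p)\land F(q))(x)$, on the nose. For $c$ I claim the pointwise equality $c(x,x')=\min(d_1(x,x'),d_2(x,x'))$. The bound $\leq$ comes from plugging $u=x$ into the infimum defining $c$. For the reverse, the triangle inequality for each $d_i$ on the double gives $d_i(x,x')\leq 2d_X(x,u)+d_i(u,u')$, whence
\[
\min(d_1(u,u'),d_2(u,u'))\geq \min(d_1(x,x'),d_2(x,x'))-2d_X(x,u).
\]
Adding $2d_X(x,u)$ and taking the infimum over $u\in X$ yields $c(x,x')\geq \min(d_1(x,x'),d_2(x,x'))$. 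Combining, $F([c])(x)=\min(F(p)(x),F(q)(x))=(F(p)\lor F(q))(x)$.

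There is no real obstacle: both equalities reduce to a one-line application of the triangle inequality, and the one for $c$ is in fact pointwise rather than merely up to equivalence. The only point worth flagging is the sign convention for $\land$ and $\lor$ on $C_m(X)$, which is precisely what makes the pointwise $\max$ correspond to the meet $\land$ and the pointwise $\min$ correspond to the join $\lor$.
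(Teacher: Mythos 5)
Your proposal is correct and follows essentially the same route as the paper: verify $c\in\mathcal M(X)$ via Lemma \ref{triangle1} with $\delta=\min(d_1(\cdot,\cdot'),d_2(\cdot,\cdot'))$, read off $b(x,x')=\max(d_1(x,x'),d_2(x,x'))$ directly, and prove the pointwise identity $c(x,x')=\min(d_1(x,x'),d_2(x,x'))$ by taking $u=x$ for one bound and the triangle inequality $d_i(x,x')\leq 2d_X(x,u)+d_i(u,u')$ for the other, with the correct reversed $\land/\lor$ convention. The only difference is cosmetic: the paper additionally notes (via Lemma \ref{proj1}) that $[c]$ is a projection, which matters for the later transfer of the lattice structure to $E(M(X))$ but is not needed for the two displayed equalities themselves.
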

\begin{proof}
As $b(x,x')=\max(d_1(x,x'),d_2(x,x'))$, the first claim is obvious. For the second claim, we have first to check that $c\in\mathcal M(X)$ (recall that unlike the maximum, the minimum of two metrics may not be a metric). The triangle inequalities for $c$ can be checked as in Lemma \ref{triangle1}. Clearly, $c^*=c$.  
The proof of Lemma \ref{proj1} can be used to show that $[c]$ is a projection. 

Note that 
\begin{equation}\label{min1}
c(x,x')=\min(d_1(x,x'),d_2(x,x')).
\end{equation} 
Indeed, taking $u=x$, we get
$$
c(x,x')=\inf_{u\in X}[2d_X(x,u)+\min(d_1(u,u'),d_2(u,u'))]\leq \min(d_1(x,x'),d_2(x,x')),
$$
and on the other hand, by the triangle inequality, we have
$$
d_i(x,x')\leq 2d_X(x,u)+d_i(u,u'), \quad i=1,2,
$$
for any $u\in X$. Hence, 
$$
\min(d_1(x,x'),d_2(x,x'))\leq [2d_X(x,u)+\min(d_1(u,u'),d_2(u,u'))]
$$
for any $u\in X$. Passing to the infimum, we get
$$
\min(d_1(x,x'),d_2(x,x'))\leq c(x,x').
$$
Thus $F([c])=F(p)\lor F(q)$.

\end{proof}

\section{Description of projections in terms of ideals}

%In this section we consider the case when $X$ is uniformly discrete.
Denote by $C_b(X)$ the commutative $C^*$-algebra of all bounded continuous complex-valued functions on $X$.
A closed ideal $J\subset C_b(X)$ is called an {\it $R$-ideal} if it has a countable approximate unit $\{u_n\}_{n\in\mathbb N}$ such that
\begin{itemize}
\item[(au1)]
$u_nu_{n+1}=u_n$ for any $n\in\mathbb N$; 
\item[(au2)]
if $u_n(x)=1$ and $u_n(y)=0$ then $d_X(x,y)> 1$.
\end{itemize}

If $\mathcal A=\{A_n\}_{n\in\mathbb N}$ is an expanding sequence then we can define a sequence of functions $u_n$, $n\in\mathbb N$, by 
$u_n(x)=\max(0,1-d_X(x,A_{2n}))$ for any $x\in X$. Then (au1)-(au2) hold, and $J=J(\mathcal A)=\operatorname{dirlim}_{n\to\infty}C_0(A_n)$ is an $R$-ideal. 

Set $G(d_{\mathcal A})=J(\mathcal A)$. It is clear that if the expanding sequences $\mathcal A$ and $\mathcal B$ define the same class in $E(M^c(X))$ then $J(\mathcal A)=J(\mathcal B)$, hence we have a well-defined map $G$ from $E(M^c(X))$ to the set of $R$-ideals of $C_b(X)$.

\begin{prop}
The map $G$ from $E(M^c(X))$ to the set of all $R$-ideals of $C_b(X)$ is one-to-one.

\end{prop}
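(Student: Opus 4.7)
The statement is that $G$ is injective, so I would begin with two expanding sequences $\mathcal{A}=\{A_n\}$ and $\mathcal{B}=\{B_n\}$ satisfying $J(\mathcal{A})=J(\mathcal{B})$ and aim to show $[d_{\mathcal{A}}]_c=[d_{\mathcal{B}}]_c$. By the coarse part of Lemma~\ref{le2}, it suffices to produce a monotone function $\varphi$ on $[0,\infty)$ with $\lim_{t\to\infty}\varphi(t)=\infty$ such that $A_n\subset B_{\varphi(n)}$ and $B_n\subset A_{\varphi(n)}$ for every $n$. The plan is to extract such a $\varphi$ by testing the canonical approximate unit of one ideal against the other.

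Fix $n\in\mathbb{N}$ and consider the canonical function $u^{\mathcal{A}}_n(x)=\max(0,1-d_X(x,A_{2n}))$, which lies in $J(\mathcal{A})=J(\mathcal{B})$. Since $\{u^{\mathcal{B}}_k\}$ is an approximate unit for $J(\mathcal{B})$, the convergence $u^{\mathcal{B}}_k\,u^{\mathcal{A}}_n\to u^{\mathcal{A}}_n$ in $C_b(X)$ yields some $k=k(n)$ with $\|u^{\mathcal{B}}_k u^{\mathcal{A}}_n-u^{\mathcal{A}}_n\|_{\infty}<1/2$. For every $x\in A_{2n}$ we have $u^{\mathcal{A}}_n(x)=1$, so this estimate forces $u^{\mathcal{B}}_k(x)>1/2$, which by the explicit formula means $d_X(x,B_{2k})<1/2$, so $A_{2n}\subset N_{1/2}(B_{2k})\subset B_{2k+1}$. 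A symmetric argument, swapping the roles of $\mathcal{A}$ and $\mathcal{B}$, produces $\ell(n)$ with $B_{2n}\subset A_{2\ell(n)+1}$.

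Replacing $k(n)$ and $\ell(n)$ by their running maxima makes them monotone increasing, and both sequences tend to infinity: otherwise a single $B_m$ would contain $\bigcup_n A_n=X$, a degenerate case in which both expanding sequences stabilize at $X$ and the two projections are trivially equal. Setting $\varphi(n):=\max(2k(n)+1,\,2\ell(n)+1)$ and extending monotonically to $[0,\infty)$ yields the function required by Lemma~\ref{le2}(b), whence $[d_{\mathcal{A}}]_c=[d_{\mathcal{B}}]_c$ and $G$ is one-to-one.

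The main obstacle I expect is packaging the two pointwise bounds into a single monotone $\varphi$ that genuinely diverges; the quantitative heart of the argument — converting the norm closeness of $u^{\mathcal{B}}_k u^{\mathcal{A}}_n$ to $u^{\mathcal{A}}_n$ into a geometric nesting of the $A_n$'s inside the $B_n$'s up to an index shift — is straightforward from the explicit form of the canonical approximate unit together with property (au2). Everything else reduces to bookkeeping.
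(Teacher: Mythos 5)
Your argument is correct and follows essentially the same route as the paper: you test an approximate unit of one ideal against a function of the other, turn the norm estimate $\|u^{\mathcal B}_k u^{\mathcal A}_n-u^{\mathcal A}_n\|<1/2$ into the inclusions $A_{2n}\subset B_{2k+1}$ (the paper does exactly this for arbitrary approximate units satisfying (au1)--(au2), using (au1) where you use the explicit formula for the canonical unit), and conclude via Lemma \ref{le2}. The only cosmetic point is that your case analysis for divergence of $\varphi$ is unnecessary: since the sets $A_n$, $B_n$ are nested, one may simply replace $\varphi(n)$ by $\max(\varphi(n),n)$.
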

\begin{proof}

Given an $R$-ideal $J\subset C_b(X)$, endowed by an approximate unit $\{u_n\}_{n\in\mathbb N}$ satisfying (au1)-(au2), we set $A_n=\{x\in X:u_n(x)=1\}$. If $y\in A_n$ and $d_X(x,y)\leq 1$ then $u_n(x)\neq 1$, hence, by (au1), $u_{n+1}(x)=1$, thus $x\in A_{n+1}$. Therefore, $N_1(A_n)\subset A_{n+1}$. Then $A_{2n}$ is an expanding sequence.

Let $\{u_n\}$, $\{v_n\}$, $n\in\mathbb N$, be two approximate units with (au1)-(au2) for an $R$-ideal $J$, and let 
$$
A_n=\{x\in X:u_n(x)=1\}, \quad B_n=\{x\in X:v_n(x)=1\}; 
$$
$\mathcal A=\{A_n\}_{n\in\mathbb N}$, $\mathcal B=\{B_n\}_{n\in\mathcal N}$. Let $x\in A_n$. As $v_n$ is an approximate unit, there exists $m\in\mathbb N$ such that $\|u_nv_m-u_n\|\leq 1/2$. Then 
$$
|u_n(x)v_m(x)-u_n(x)|=|v_m(x)-1|\leq 1/2, 
$$
hence $v_m(x)\neq 0$. But then (au1) implies that $v_{m+1}(x)=1$. Thus, $A_n\subset B_{m+1}$. Symmetrically, for any $n\in\mathbb N$ there exists $m\in\mathbb N$ such that $B_n\subset A_m$, hence $d_{\mathcal A}\sim_c d_{\mathcal B}$.   

\end{proof}

\begin{lem}
Let $e,f\in E(M^c(X))$, $e=[d_\mathcal A]$, $f=[d_\mathcal B]$. Then $G(e\land f)=G(e)\cap G(f)$, $G(e\lor f)=G(e)+G(f)$.

\end{lem}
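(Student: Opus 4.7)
The plan is to reduce the lemma to a computation on expanding sequences via Lemma \ref{lattice1} and Theorem \ref{functions}, after which both identities become standard facts about closed ideals in $C^*$-algebras with approximate units.

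First I would handle the meet. By Lemma \ref{lattice1}, $F^c(e\land f)$ is represented by the function $x\mapsto\max(d_{\mathcal A}(x,x'),d_{\mathcal B}(x,x'))$. The proof of Theorem \ref{functions} shows that the expanding sequence attached to this function is $\{C_n\}$ with
$$
C_n=\{x:d_{\mathcal A}(x,x')\leq n\}\cap\{x:d_{\mathcal B}(x,x')\leq n\}.
$$
By Theorem \ref{equiv}, $\{x:d_{\mathcal A}(x,x')\leq n\}$ is coarsely equivalent, in the sense of Lemma \ref{le2}(b2), to $\{A_n\}$, and similarly for $\mathcal B$, so $\{C_n\}$ is coarsely equivalent to $\{A_n\cap B_n\}$ and $G(e\land f)=J(\{A_n\cap B_n\})$. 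The inclusion $J(\{A_n\cap B_n\})\subset G(e)\cap G(f)$ is immediate from the direct limit description. Conversely, given $\phi\in G(e)\cap G(f)$ and $\varepsilon>0$, I would pick $n$ large enough that $|\phi(x)|<\varepsilon$ both for $x\notin A_n$ and for $x\notin B_n$; since $X\setminus(A_n\cap B_n)=(X\setminus A_n)\cup(X\setminus B_n)$, the same bound holds off $A_n\cap B_n$, whence $\phi\in J(\{A_n\cap B_n\})$.

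Next I would handle the join. By formula (\ref{min1}) in Lemma \ref{lattice1}, $F^c(e\lor f)$ is represented by $x\mapsto\min(d_{\mathcal A}(x,x'),d_{\mathcal B}(x,x'))$, so the same reasoning identifies $G(e\lor f)$ with $J(\{A_n\cup B_n\})$. The inclusion $G(e)+G(f)\subset G(e\lor f)$ is immediate. For the opposite inclusion, let $\phi\in J(\{A_n\cup B_n\})$, let $u_n^A$ and $u_n^B$ be the canonical approximate units for $G(e)$ and $G(f)$ constructed from $\mathcal A$ and $\mathcal B$, and set
$$
\tilde u_n=1-(1-u_n^A)(1-u_n^B)=u_n^A+(1-u_n^A)u_n^B.
$$
Since $\tilde u_n=1$ on $A_n\cup B_n$ and $\phi$ is uniformly small off $A_n\cup B_n$ for $n$ large, we have $\tilde u_n\phi\to\phi$ in norm. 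The second form of $\tilde u_n$ writes $\tilde u_n\phi=u_n^A\phi+(1-u_n^A)u_n^B\phi\in G(e)+G(f)$ for every $n$, and since the sum of two closed ideals in a $C^*$-algebra is itself closed, $\phi\in G(e)+G(f)$.

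The main obstacle I anticipate is the reverse inclusion in the join case: converting an element that is merely \emph{essentially supported} on $A_n\cup B_n$ into a genuine sum of elements of $G(e)$ and $G(f)$. A naive partition-of-unity argument is awkward when $A_n$ and $B_n$ overlap heavily, but the multiplicative cutoff $1-(1-u_n^A)(1-u_n^B)$ sidesteps the difficulty by producing an additive decomposition automatically, and closedness of sums of closed ideals finishes the proof.
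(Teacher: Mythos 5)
Your argument is correct, but the verification mechanism differs from the paper's. You first identify the expanding sequences of $e\land f$ and $e\lor f$ as (coarse representatives of) $\{A_n\cap B_n\}$ and $\{A_n\cup B_n\}$ via Lemma \ref{lattice1}, Theorem \ref{functions} and Lemma \ref{le2}, and then prove the two ideal identities elementwise: for the meet by noting that an element of $G(e)\cap G(f)$ is uniformly small off $A_n\cap B_n$, and for the join by the cutoff $\tilde u_n=1-(1-u_n^A)(1-u_n^B)=u_n^A+(1-u_n^A)u_n^B$, which gives an explicit additive decomposition and lets closedness of the sum of two closed ideals finish the argument. The paper instead stays entirely at the level of approximate units: it sets $w_n=u_nv_n\in G(e)\cap G(f)$ and $t_n=\min(u_n+v_n,1)\in G(e)+G(f)$, checks that these satisfy (au1)--(au2) and that their level-one sets are $A_n\cap B_n$ and (up to one step) $A_n\cup B_n$, and then invokes the bijection between $R$-ideals and classes of expanding sequences from the preceding Proposition; your $\tilde u_n$ is essentially the paper's $t_n$ in multiplicative form, but you use it to approximate individual elements rather than to exhibit an approximate unit. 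Your route is a bit longer but more self-contained (it does not lean on the injectivity statement for $G$), while the paper's is shorter because all the work is absorbed into the already established $R$-ideal correspondence. One small gloss on your side: the conclusion ``whence $\phi\in J(\{A_n\cap B_n\})$'' in the meet case needs one more line --- multiply $\phi$ by the canonical cutoff of $J(\{A_n\cap B_n\})$ and use the ideal property, exactly as you do in the join case --- since smallness off $A_n\cap B_n$ by itself is only the necessary half of the membership criterion for the direct-limit ideal.
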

\begin{proof}
Recall that the sum and the intersection of closed ideals in a $C^*$-algebra is a closed ideal as well.
Let $\{u_n\}_{n\in\mathbb N}$ and $\{v_n\}_{n\in\mathbb N}$ be approximate units for $J(\mathcal A)$ and $J(\mathcal B)$, respectively, satisfying (au1)-(au2). Set $w_n=u_nv_n$, $t_n=\min(u_n+v_n,1)$. Clearly, $w_n\in J(\mathcal A)\cap J(\mathcal B)$ and $t_n\in J(\mathcal A)+J(\mathcal B)$. We have 
$$
\{x\in X:w_n(x)=1\}=A_n\cap B_n,\quad A_n\cup B_n\subset \{x\in X:t_n(x)=1\}\subset A_{n+1}\cup B_{n+1}, 
$$
and it follows that $\{w_n\}$ and $\{t_n\}$ are approximate units in the respecting ideals. Properties (au1)-(au2) obviously hold. 

\end{proof}

\section{The dual space for $E(M(X))$}

Theorem \ref{functions}, together with Lemma \ref{lattice1}, allows to introduce the structure of a distributive lattice on $E(M^q(X))$ and on $E(M^c(X))$. Note that for general inverse semigroups, the set of idempotents is only a semi-lattice, but $E(M^{(\cdot)}(X))$ is a lattice. On the other hand, as the complement to a given projection may not exist (see Example \ref{ex1} below), $E(M^q(X))$ and $E(M^c(X))$ are not Boolean algebras. The maximal and the minimal elements in $E(M(X))$ are represented by the unit element $\mathbf 1$ and the zero element $\mathbf 0$ of $E(M(X))$ respectively. 

By \cite{MacNeille} (see also \cite{G-S}), there exists the smallest Boolean algebra $L^q(X)$ (resp., $L^c(X)$) containing the distributive lattice $E(M^q(X))$ (resp., $E(M^c(X))$). Recall its construction.
Let $H(X)$ denote all finite sums of the form $\sum_{i=1}^n e_i$, where $e_i\in E(M(X))\setminus\{0\}$, with summation mod 2, i.e. $e+e=0$ for any $e\in E(M(X))$, and with $\land$ defined by linearity. The inclusion $E(M(X))\subset H(X)$ is obvious for non-zero elements: if $e\neq \mathbf 0$ then $e$ is mapped to the sum with a single summand $e$; $\mathbf 0$ is mapped to the empty sum (the zero element of $H(X)$). Let $I$ be the ideal in $H(X)$ generated by $e+f+e\land f+e\lor f$, $e,f\in E(M(X))$, and let $L(X)=H(X)/I$. Then $L(X)$ is a Boolean algebra (the complement operation is given by $\overline{e}=\mathbf 1+e$).

Let $\widehat X^q=S(L^q(X))$ (resp., $\widehat X^c=S(L^c(X))$) denote the Stone space of the Boolean algebra $L^q(X)$ (resp., $L^c(X)$). Recall that the points of $\widehat X$ are Boolean homomorphisms from $L(X)$ to the two-element Boolean algebra $\mathbb Z/2$ (or, equivalently, ultrafilters on $L(X)$), and the topology on $\widehat X$ is the topology of pointwise convergence of nets of homomorphisms. By the Stone's representation Theorem \cite{Stone}, $L(X)$ is isomorphic to the algebra of clopen subsets of its Stone space $\widehat X$. Also, $\widehat X$ is compact, Hausdorff, and totally disconnected.

The following fact should be known to specialists, but we were unable to find a reference.

\begin{prop}
The set $\widehat X$ of Boolean homomorphisms from $L(X)$ to $\mathbb Z/2$ is canonically isomorphic to the set $\Hom(E(M(X)),\mathbb Z/2)$ of unital lattice homomorphisms from $E(M(X))$ to $\mathbb Z/2$.

\end{prop}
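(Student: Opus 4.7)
The plan is to exhibit a restriction map $r\colon\widehat X\to\Hom(E(M(X)),\mathbb Z/2)$ and an extension map $s$ in the opposite direction, and to verify they are mutually inverse. This is essentially a general statement about the Boolean envelope $L(X)=H(X)/I$ of a distributive lattice, and the only substantive input is the identity $a\lor b=a+b+ab$ valid in the Boolean ring $\mathbb Z/2$.

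First I would take a Boolean homomorphism $\varphi\colon L(X)\to\mathbb Z/2$ and let $r(\varphi)$ be its restriction along the canonical embedding $E(M(X))\hookrightarrow L(X)$. Preservation of $\land$ is automatic, since by construction the multiplication on $H(X)$ (and hence on $L(X)$) agrees on single generators with the original meet. Preservation of $\lor$ then uses the defining relation $e+f+e\land f+e\lor f\in I$, which in $L(X)$ rewrites as $e\lor f=e+f+e\land f$; thus $\varphi$ carries the lattice join in $E(M(X))$ to the Boolean join in $\mathbb Z/2$, and this coincides with the lattice join there. Unitality is clear since $\mathbf 1$ is the unit of $L(X)$ while $\mathbf 0$ is the empty sum in $H(X)$.

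Conversely, given a unital lattice homomorphism $\psi\colon E(M(X))\to\mathbb Z/2$, I would first extend it additively to $\tilde\psi\colon H(X)\to\mathbb Z/2$ by $\tilde\psi\bigl(\sum_i e_i\bigr)=\sum_i\psi(e_i)\bmod 2$; this is multiplicative with respect to $\land$ because $\land$ on $H(X)$ is bilinear and $\psi(e\land f)=\psi(e)\psi(f)$. The main calculation is to show that $\tilde\psi$ kills every generator of $I$: using $a\land b=ab$ and $a\lor b=a+b+ab$ in $\mathbb Z/2$, the image of $e+f+e\land f+e\lor f$ is $\psi(e)+\psi(f)+\psi(e)\psi(f)+\bigl(\psi(e)+\psi(f)+\psi(e)\psi(f)\bigr)=0$. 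Hence $\tilde\psi$ descends to a unital Boolean homomorphism $s(\psi)\colon L(X)\to\mathbb Z/2$.

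Finally, $r$ and $s$ are mutually inverse: $r\circ s=\id$ is immediate, and $s\circ r=\id$ holds because $L(X)$ is generated as an abelian group by the image of $E(M(X))$, so any Boolean homomorphism is determined by its restriction. I do not anticipate a genuine obstacle here; the only step that is not pure bookkeeping is the compatibility computation for the generators of $I$ displayed above, which is precisely where the identity $a\lor b=a+b+ab$ in $\mathbb Z/2$ does the work.
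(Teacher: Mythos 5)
Your proposal is correct and follows essentially the same route as the paper: restriction along $E(M(X))\hookrightarrow L(X)$ in one direction, additive extension to $H(X)$ in the other, with the key computation being that the generators $e+f+e\land f+e\lor f$ of $I$ are killed via the identity $a\lor b=a+b+ab$ (equivalently $a\lor b+a\land b+a+b=0$) in $\mathbb Z/2$. The verification that the two constructions are mutually inverse is handled the same way in both arguments.
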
 
\begin{proof}
The inclusion $E(M(X))\subset L(X)$ determines, by restriction, the map from $\widehat X$ to $\Hom(E(M(X)),\mathbb Z/2)$. To construct a map in the opposite direction, note that any $\varphi\in\Hom(E(M(X)),\mathbb Z/2)$ extends to a map $\bar\varphi:H(X)\to\mathbb Z/2$ by setting 
$$
\bar\varphi(e_1+\cdots+e_n)=\varphi(e_1)+\cdots+\varphi(e_n),\quad e_1,\ldots,e_n\in E(M(X)). 
$$
Then $\bar\varphi(1)=1$, and $\bar\varphi$ respects the operations $+$ and $\land$. Note that 
$$
\varphi(e\lor f)+\varphi(e\land f)+\varphi(e)+\varphi(f)=\varphi(e)\lor\varphi(f)+\varphi(e)\land\varphi(f)+\varphi(e)+\varphi(f)=0
$$ 
in $\mathbb Z/2$ for any $e,f\in E(M(X))$, i.e. $\bar\varphi|_I=0$, hence $\bar\varphi$ factorizes through $L(X)$, and the map $\check\varphi:L(X)\to\mathbb Z/2$ is a unital lattice homomorphism. As $\bar\varphi(1+f)=1+\varphi(f)$, $\check\varphi$ is also a Boolean algebra homomorphism. It is trivial to check that the two constructions are inverse to each other, hence they give a one-to-one correspondence.

\end{proof}

After the above identification, the pre-base of the topology on $\widehat X$ is given by the sets 
$$
U_e^0=\{\varphi\in\Hom(E(M(X)),\mathbb Z/2):\varphi(e)=0\}
$$ 
and 
$$
U_e^1=\{\varphi\in\Hom(E(M(X)),\mathbb Z/2):\varphi(e)=1\}, 
$$ 
$e\in E(M(X))$.

The quotient map $E(M^q(X))\to E(M^c(X))$ induces the inclusion $\widehat X^c\subset \widehat X^q$. It is easy to see that $\widehat X^q$ is much bigger. For example, if $\varphi(t)=t^{a}$, $0<a\leq 1$, then $f^a(x)=\varphi(d_X(x,x_0))$ defines a function in $\mathcal C_m(X)$, hence a metric $d^a$ on the double of $X$. Then $d^a$ is not quasi-equivalent to $d^b$ when $a\neq b$, while $d^a\sim_c d^b$ for any $a,b\in(0,1] $. Therefore, it is easier to describe the structure of $\widehat X^c$. It is still too complicated, but we are able to describe a dense subset of $\widehat X^c$ in the case when $X$ is an infinite discrete proper metric space.

Let $\omega$ be a free ultrafilter on $X$. Recall that it means that $\omega$ is a set of subsets of $X$ satisfying 
\begin{itemize}
\item[(uf1)]
$X\in\omega$;
\item[(uf2)]
any finite subset of $X$ doesn't lie in $\omega$;
\item[(uf3)]
if $A\in\omega$ and $A\subset B\subset X$ then $B\in\omega$;
\item[(uf4)]
if $A,B\in\omega$ then $A\cap B\in\omega$;
\item[(uf5)]
for any $A\subset X$ either $A$ or $X\setminus A$ lies in $\omega$.
\end{itemize}

For any infinite set $A\subset X$ there exists a free ultrafilter $\omega$ such that $A\in\omega$.
For $A\subset X$, we write $\omega(A)=1$ if $A\in\omega$, and $\omega(A)=0$ if $A\notin\omega$. We denote the set of all free ultrafilters on $X$  by $\Omega$.

Let $s\in E(M^c(X))$, and let $\mathcal A$ be an expanding sequence of subsets such that $[d_{\mathcal A}]_c=s$. Set $\tau_\omega(s)=\lim_{n\to\infty}\omega(A_n)$. This limit always exists, and it follows from Lemma \ref{le2} that if $\mathcal B$ is another expanding sequence with $[d_{\mathcal B}]_c=s$ then 
$$
\lim_{n\to\infty}\omega(A_n)=\lim_{n\to\infty}\omega(B_n), 
$$
hence 
$$
\tau_\omega:E(M^c(X))\to\{0,1\}
$$ 
is well-defined. For any free ultrafilter $\omega$, $\tau_\omega(\mathbf 1)=1$, and if $\mathcal A=\{A_n\}$, $\mathcal B=\{B_n\}$, $C_n=A_n\cup B_n$, $n\in\mathbb N$, then $\omega(C_n)=\omega(A_n)\land\omega(B_n)$. It is easy to see that $\mathcal C=\{C_n\}_{n\in\mathbb N}$ is an expanding sequence for $[d_{\mathcal A}d_{\mathcal B}]=[d_{\mathcal A}]\land[d_{\mathcal B}]$, and we have $\tau_\omega([d_\mathcal C]_c)=\tau_\omega([d_\mathcal A]_c)\land\tau_\omega([d_\mathcal B]_c)$. Thus, $\tau_\omega\in\widehat X^c$ for any $\omega\in\Omega$.

\begin{lem}\label{coarse_zero}
Let $[d]_c\in E(M^c(X))$ be zero. Then all $A_n$, $n\in\mathbb N$, are bounded.

\end{lem}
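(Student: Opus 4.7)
The plan is to unwind what it means for $[d]_c$ to be the zero element and then invoke Lemma \ref{le2}(b) to trap each level set $A_n$ inside a ball centered at the base point.

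First, I would recall from the paragraph preceding Lemma \ref{0} that the zero element $\mathbf{0} \in M(X)$ is represented by the metric $d_{x_0}$ defined by $d_{x_0}(x,y') = d_X(x,x_0) + 1 + d_X(x_0,y)$. In particular, along the diagonal, $d_{x_0}(x,x') = 2d_X(x,x_0) + 1$, so the corresponding level sets
\[
A^{(0)}_n = \{x \in X : d_{x_0}(x,x') \leq n\}
\]
coincide with the balls $B_{(n-1)/2}(x_0)$, which are bounded by construction.

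Next, since by hypothesis $[d]_c = \mathbf{0} = [d_{x_0}]_c$, I apply the coarse part of Lemma \ref{le2} with $d_1 = d$ and $d_2 = d_{x_0}$. It produces a monotone function $\varphi : [0,\infty) \to [0,\infty)$ with $\lim_{t\to\infty} \varphi(t) = \infty$ such that $A_n \subset A^{(0)}_{\varphi(n)}$ for every $n \in \mathbb{N}$. Combining this with the previous paragraph gives
\[
A_n \subset B_{(\varphi(n)-1)/2}(x_0),
\]
so $A_n$ is bounded for each $n$.

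There is no substantial obstacle here: once the zero element is written out explicitly and Lemma \ref{le2}(b) is applied, the conclusion is essentially immediate. The only thing to be careful about is using the coarse, rather than quasi, version of the comparison lemma, since only the former is available under the weaker hypothesis $[d]_c = \mathbf{0}$.
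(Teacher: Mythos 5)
Your proof is correct and follows essentially the same route as the paper: the paper's own argument unwinds $[d]_c=\mathbf 0$ into the inequality $d(x,x')\geq\varphi(d_X(x,x_0))$ and deduces that each level set $A_n$ lies in a ball, which is exactly what your application of Lemma \ref{le2}(b) to $d$ and $d_{x_0}$ (using $d_{x_0}(x,x')=2d_X(x,x_0)+1$) accomplishes. Note you only need the easy direction (b1)$\Rightarrow$(b2) of that lemma, so no selfadjointness or idempotency subtleties arise.
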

\begin{proof}
The condition $[d]_c=\mathbf 0$ means that there exists a function $\varphi:[0,\infty)\to[0,\infty)$ with $\lim_{t\to\infty}\varphi(t)=\infty$ such that $d(x,x')\geq\varphi (d_X(x,x_0))$ for some $x_0\in X$. Then $x\in A_n$ implies that $d_X(x,x_0)\leq n$.

\end{proof}  

\begin{lem}\label{dist}
Let $\varphi\in\Hom(E(M^c(X)),\mathbb Z/2)$, $\varphi(e)=0$. Then for any $R>0$ and any $n\in\mathbb N$ there exists $x\in X$ such that $d_X(x,A_n)>R$. 

\end{lem}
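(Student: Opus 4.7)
The plan is to argue by contradiction. Write $e=[d_{\mathcal A}]_c$ for some expanding sequence $\mathcal A=\{A_n\}_{n\in\mathbb N}$, and suppose there exist $R>0$ and $n\in\mathbb N$ such that $d_X(x,A_n)\leq R$ for every $x\in X$. I will show this forces $e=\mathbf 1$, contradicting $\varphi(e)=0$, since every $\varphi\in\Hom(E(M^c(X)),\mathbb Z/2)$ is unital by the preceding proposition.

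For any $x\in X$, pick $z\in A_n$ with $d_X(x,z)\leq R$; since $z\in A_n$ we have $\delta_{\mathcal A}(z,z')\leq n$. Substituting $u=z$ into the infimum defining $d_{\mathcal A}(x,x')$ yields
\[
d_{\mathcal A}(x,x')\leq 2d_X(x,z)+\delta_{\mathcal A}(z,z')\leq 2R+n,
\]
so the diagonal function $x\mapsto d_{\mathcal A}(x,x')$ is bounded on $X$.

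Under the bijection $F^c:E(M^c(X))\to C_m^c(X)$ of Theorem \ref{functions}, the unit $\mathbf 1$ corresponds to the coarse equivalence class of any bounded function in $\mathcal C_m(X)$: if $f_1,f_2\in\mathcal C_m(X)$ are bounded above by $M_1,M_2$, then the monotone maps $t\mapsto\max(0,t-M_1)$ and $t\mapsto\max(M_2,t)$ tend to $\infty$ and sandwich $f_2(x)$ between them when applied to $f_1(x)$, so all bounded representatives lie in a single class. A short verification (any $g\in\mathcal C_m(X)$ satisfies $g(x)\leq\max(M_2,g(x))$, which is a composition with the bounded $f_2$) shows this class is the top of $C_m^c(X)$, hence equal to $F^c(\mathbf 1)$. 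Thus $F^c(e)=F^c(\mathbf 1)$, and injectivity of $F^c$ gives $e=\mathbf 1$, whence $\varphi(e)=\varphi(\mathbf 1)=1$, the desired contradiction.

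The single conceptual step is the identification of $\mathbf 1$ with the class of bounded diagonal functions via the lattice structure of $C_m^c(X)$; once that is in place, the core estimate $d_{\mathcal A}(x,x')\leq 2R+n$ follows from a one-line substitution into the infimum, so no real obstacle arises.
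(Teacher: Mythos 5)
Your proposal is correct and follows essentially the same strategy as the paper: argue by contradiction, observing that $d_X(\cdot,A_n)\leq R$ everywhere forces $e=\mathbf 1$, which contradicts $\varphi(e)=0$ since $\varphi$ is unital. The only (harmless) difference is in how you certify $e=\mathbf 1$: the paper notes directly that $N_R(A_n)=X$ implies $A_m=X$ for $m$ large, by the expanding property, whereas you bound the diagonal function $d_{\mathcal A}(x,x')\leq 2R+n$ and pass through the bijection $F^c$ of Theorem \ref{functions}, identifying $F^c(\mathbf 1)$ with the class of bounded functions --- a slightly longer route to the same conclusion.
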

\begin{proof}
Assume the contrary: there exists $R>0$ and $n\in\mathbb N$ such that $d_X(x,A_n)\leq R$ for any $x\in X$, then $N_R(A_n)=X$, and $A_m=X$ for $m$ sufficiently great, which means that $e\sim_c 1$, hence $\varphi(e)=1$ --- a contradiction. 

\end{proof}

\begin{thm}
The set $\{\tau_\omega:\omega\in\Omega\}$ is dense in $\widehat X^c$.

\end{thm}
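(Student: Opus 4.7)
The plan is to verify density: every non-empty basic open set $U\subset\widehat X^c$ must meet $\{\tau_\omega:\omega\in\Omega\}$. The topology on $\widehat X^c$ has pre-base $\{U^0_e,U^1_e:e\in E(M^c(X))\}$, so after taking a finite intersection we may assume
\[
U=\{\varphi:\varphi(e_i)=1 \text{ for } i\in I,\ \varphi(f_j)=0 \text{ for } j\in J\},
\]
with $I,J$ finite and $e_i,f_j\in E(M^c(X))$. Setting $e=\bigwedge_{i\in I}e_i$ and $f=\bigvee_{j\in J}f_j$ (with conventions $\bigwedge_\emptyset=\mathbf 1$, $\bigvee_\emptyset=\mathbf 0$), any $\varphi\in U$ is a unital lattice homomorphism with $\varphi(e)=1$ and $\varphi(f)=0$, so non-emptiness of $U$ forces $e\not\le f$ in the lattice $E(M^c(X))$.

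Next I translate $e\not\le f$ into set-theoretic data. Choose expanding sequences $\mathcal A=\{A_n\}$ and $\mathcal B=\{B_n\}$ with $e=[d_{\mathcal A}]_c$ and $f=[d_{\mathcal B}]_c$, provided by Theorem \ref{equiv}. Using the bijection $F^c$ of Theorem \ref{functions} and the description of $\preceq$ on $C^c_m(X)$, the relation $e\le f$ is equivalent to the existence of a monotone $\psi$ with $\lim_{t\to\infty}\psi(t)=\infty$ and $A_n\subset B_{\psi(n)}$ for every $n$. Thus $e\not\le f$ supplies $n_0\in\mathbb N$ with $A_{n_0}\setminus B_m\ne\emptyset$ for every $m\in\mathbb N$.

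I upgrade this to infiniteness: the family $\{A_{n_0}\setminus B_m\}_m$ is decreasing, so if it were finite for some $m$ it would stabilise to a non-empty set $C\subset A_{n_0}$ disjoint from every sufficiently large $B_m$, contradicting $\bigcup_m B_m=X$. Hence every $A_{n_0}\setminus B_m$ is infinite. The filter $\mathcal F$ on $X$ generated by $A_{n_0}$ and $\{X\setminus B_m:m\in\mathbb N\}$ therefore has a basis of infinite sets, so adjoining the Fr\'echet filter and applying Zorn's lemma extends $\mathcal F$ to a free ultrafilter $\omega\in\Omega$.

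Finally I verify $\tau_\omega\in U$. Since $A_{n_0}\in\omega$ and the $A_n$ are increasing, $\omega(A_n)=1$ for all $n\ge n_0$, and hence $\tau_\omega(e)=1$; since $X\setminus B_m\in\omega$, property (uf5) gives $\omega(B_m)=0$ for every $m$, so $\tau_\omega(f)=0$. Because $e\le e_i$ and $f_j\le f$ in the lattice and $\tau_\omega$ is a lattice homomorphism, it follows that $\tau_\omega(e_i)=1$ and $\tau_\omega(f_j)=0$ for every $i,j$, so $\tau_\omega\in U$. The main delicate point is the upgrade from ``non-empty'' to ``infinite'' in the third paragraph: this is precisely what allows the constructed ultrafilter to be free rather than merely principal, and without it $\tau_\omega$ would not be defined.
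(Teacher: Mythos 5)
Your argument is correct, but it follows a genuinely different route from the paper's proof. After the same reduction to a basic open set $\{\varphi:\varphi(e)=1,\ \varphi(f)=0\}$, the paper works metrically: from $\varphi(e\land f)=0$ it uses Lemma \ref{dist} to pick points $x^n_n$ with $d_X(x^n_n,A_n\cap B_n)>n$, forms the sparse set $D=\{x^n_n\}$, shows $N_n(D)\cap(A_n\cap B_n)$ is finite (here discreteness and properness of $X$ are used to pass from bounded to finite), and then chooses a free ultrafilter with $\omega(D)=\omega(B_m)=1$. You instead exploit the order: non-emptiness of the basic set forces $e\not\preceq f$, which via Theorem \ref{functions} (equivalently, via Lemma \ref{le2}(b2) applied to $e$ and $e\land f$, since $\{A_n\cap B_n\}$ is an expanding sequence for $e\land f$) yields an $n_0$ with $A_{n_0}\not\subset B_m$ for all $m$; your stabilisation argument upgrading $A_{n_0}\setminus B_m$ from non-empty to infinite (using $\bigcup_m B_m=X$) is exactly the point that makes the filter generated by $A_{n_0}$, the sets $X\setminus B_m$ and the Fr\'echet filter proper, so it extends to a free ultrafilter, and the verification $\tau_\omega(e)=1$, $\tau_\omega(f)=0$ is immediate. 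What your approach buys: it is purely lattice/set-theoretic once the expanding-sequence calculus is in place, it treats the two kinds of conditions symmetrically in one stroke, and it avoids the compatibility question implicit in the paper's final choice of $\omega$ (an ultrafilter containing both $D$ and $B_m$ requires $D\cap B_m$ to be infinite, which the paper does not address explicitly); what the paper's approach buys is an explicit metric construction of the witnessing sparse set, in the spirit of the rest of the section. One small point worth making explicit in your write-up: the equivalence ``$e\preceq f$ iff $A_n\subset B_{\psi(n)}$ for some monotone $\psi\to\infty$'' uses that $F^c$ is an order isomorphism (the paper states only one direction of monotonicity of $F$), but this is harmless because the lattice structure on $E(M^c(X))$, and hence the space $\widehat X^c$, is defined by transporting the structure of $C^c_m(X)$ through the bijection $F^c$.
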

\begin{proof}
We have to check that each open subset of $\widehat X$ contains a point of the form $\tau_\omega$. Consider first the set $U_e^1$, $e\in E(M^c(X))$. We need to find $\omega$ such that $\tau_\omega(e)=\varphi(e)=1$. The condition $\varphi(e)=1$ implies that $e\neq \mathbf 0$, hence, by Lemma \ref{coarse_zero}, there is $n\in\mathbb N$ such that $A_n$ is unbounded, hence infinite. Let $\omega\in\Omega$ satisfy $\omega(A_n)=1$. Then $\tau_\omega(\mathcal A)=1$.

Second, consider the case when the open set is $U_e^0$. 
By Lemma \ref{dist}, we can choose points $x^k_n$, $k,n\in\mathbb N$, such that $d_X(x^k_n,A_k)>n$. Set $B=\{x^n_n\}_{n\in\mathbb N}$. Let $f\in E(M^c(X))$ correspond to the expanding sequence $B_n=N_n(B)$, $n\in\mathbb N$. Let $x\in N_m(B)\cap A_m$ for some $m\in\mathbb N$. 

Since $x\in N_m(B)$, there exists $n\in\mathbb N$ such that $d_X(x,x^n_n)\leq m$. There are two possibilities: $n\leq m$ and $n>m$. In the latter case, $A_m\subset A_n$, hence $d_X(x^n_n,A_m)\geq d_X(x^n_n,A_n)>n$. Since $x\in A_m$, $d_X(x^n_n,A_m)\leq d_X(x,x^n_n)\leq m$. Therefore, 
$$
m\geq d_X(x,x^n_n)\geq d_X(x^n_n,A_m)\geq d_X(x^n_n,A_n)>n.
$$
Hence, $n\leq m$. Then $d_X(x,\{x^1_1,\ldots,x^m_m\})\leq m$, hence the set $N_m(B)\cap A_m$ is bounded, hence finite.

Let $\omega\in\Omega$ be a free ultrafilter such that $\omega(B)=1$. Then $\omega(A_m)=0$ for any $m\in\mathbb N$, hence $\tau_\omega(e)=0$.

Finally, we pass to a general open set in the base of the topology of $\widehat X^c$, namely, 
$$
U_{e_1}^0\cap\cdots\cap U_{e_n}^0\cap U_{f_1}^1\cap\cdots\cap U_{f_m}^1, 
$$
where $e_1,\ldots,e_n,f_1,\ldots,f_m\in E(M^c(X))$. 
Note that $a=b=0$ and $a\lor b=0$ (resp., $a=b=1$ and $a\land b=0$) are equivalent for $a,b\in\mathbb Z/2$, hence 
$$
U_{e_1}^0\cap\cdots\cap U_{e_n}^0=U_{e_1\lor\cdots\lor e_n}^0,\quad 
U_{f_1}^1\cap\cdots\cap U_{f_m}^1=U_{f_1\land\cdots\land f_m}^1, 
$$
so it remains to consider open sets of the form $U_e^0\cap U_f^1$ for $e,f\in E(M^c(X))$.

Let $\varphi\in\Hom(E(M^c(X)),\mathbb Z/2)$ satisfy $\varphi(e)=0$ and $\varphi(f)=1$, and let $\mathcal A$ and $\mathcal B$ be the expanding sequences for $e$ and $f$ respectively. Note that $C_n=A_n\cap B_n$, $n\in\mathbb N$, is the expanding sequence for $e\land f$, and that $\varphi(e\land f)=\varphi(e)\land\varphi(f)=0$. Therefore, by Lemma \ref{dist} and the argument after that Lemma, there exists an infinite set $D$ such that $N_n(D)\cap(A_n\cap B_n)$ is finite. As $\varphi(f)=1$, there exists $m\in\mathbb N$ such that $B_m$ is infinite. Let $\omega\in\Omega$ satisfy $\omega(D)=1$ and $\omega(B_m)=1$. Then $\omega(B_n)=1$ for any $n\geq m$, and $\omega(D\cap B_n)=1$. As $A_n\cap(D\cap B_n)$ is finite, $\omega(A_n)=0$ for all $n\geq m$. Therefore, $\tau_\omega(e)=0$. As $\omega(B_m)=1$, $\tau_\omega(f)=1$. 

\end{proof}

\begin{remark}
The assignment $\omega\mapsto\tau_\omega$ defines a map $\tau:\beta X\setminus X\to\widehat X^c$, where $\beta X$ is the Stone--\v Cech compactification of $X$. Regretfully, this map is not continuous. Although 
$$
\tau^{-1}(U^1_e)=\{\omega\in\Omega:\omega(A_n)=1 \mbox{\ for\ some\ }n\}
$$ 
is open, but 
$$
\tau^{-1}(U^0_e)=\{\omega\in\Omega:\omega(A_n)=0 \mbox{\ for\ any\ }n\}=\cap_{n\in\mathbb N}\{\omega\in\Omega:\omega(A_n)=0\}
$$ 
need not be open.
We also don't know any condition allowing to determine when $\omega_1,\omega_2\in\Omega$ satisfy $\tau_{\omega_1}=\tau_{\omega_2}$ (but see Example \ref{ex2}).    

\end{remark}

\section{Measures on $\widehat X$}

Let $Cl(X)$ denote the set of closed subsets of $X$.
We call a map $\mu:Cl(X)\to[0,1]$ an {\it admissible measure on $X$} if
\begin{itemize}
\item[(am1)]
$\mu$ is a finitely additive probability measure;
\item[(am2)]
$\mu(K)=0$ for any bounded closed set $K\subset X$.
\end{itemize}

Note that if $X$ is discrete and proper then (am2) means that $\mu$ vanishes on finite subsets.

Let $a\in E(M(X))$, and let $\mathcal A=\{A_n\}_{n\in\mathbb N}$ be its expanding sequence. For an admissible measure $\mu$ on $X$, set $\hat\nu_\mu(a)=\lim_{n\in\mathbb N}\mu(A_n)$. As $A_n\subset A_{n+1}$, the sequence $\mu(A_n)$ is monotonely non-decreasing and bounded from above by $\mu(X)=1$, hence the limit exists. Moreover, if $\mathcal B=\{B_n\}$ is another expanding sequence for $a$ then there is a monotone function $\varphi$ on $[0,\infty)$ with $\lim_{t\to\infty}\varphi(t)=\infty$ (if we are working with quasi-equivalence then this function is linear) such that $A_n\subset B_{\varphi(n)}$ and $B_n\subset A_{\varphi(n)}$, hence $\lim_{t\to\infty}\mu(A_n)=\lim_{t\to\infty}\mu(B_n)$. Thus the map $\hat\nu_\mu:E(M(X))\to[0,1]$ is well-defined. 

\begin{lem} The map $\hat\nu_\mu$ has the following properties:

\begin{itemize}
\item[(n1)]
$\hat\nu_\mu(\mathbf 0)=0$, $\hat\nu_\mu(\mathbf 1)=1$;
\item[(n2)]
$\hat\nu_\mu(a)\leq\hat\nu_\mu(b)$ if $a\preceq b$, $a,b\in E(M(X))$;
\item[(n3)]
$\hat\nu_\mu(a\land b)+\hat\nu_\mu(a\lor b)=\hat\nu_\mu(a)+\hat\nu_\mu(b)$ for any $a,b\in E(M(X))$.

\end{itemize}
\end{lem}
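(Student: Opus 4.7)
The plan is to reduce each of the three properties to a set-theoretic statement about the expanding sequences $\{A_n\}$ that represent the projections, and then to invoke the fact that $\mu$ is a finitely additive probability measure vanishing on bounded closed subsets.

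For (n1), the zero element $\mathbf{0}$ is represented by $d_{x_0}$, whose expanding sequence $\mathcal{E}_{\{x_0\}} = \{N_{n/2}(\{x_0\})\}_n$ consists of bounded closed balls around $x_0$. By (am2), $\mu$ vanishes on each such ball, so $\hat\nu_\mu(\mathbf{0}) = 0$. For $\mathbf{1}$, I would use the bijection $F$ of Theorem \ref{functions}: the class of the constant function $1 \in \mathcal{C}_m(X)$ corresponds to $\mathbf{1}$, and its expanding sequence can be taken as $A_n = X$ for every $n \geq 1$, giving $\hat\nu_\mu(\mathbf{1}) = \mu(X) = 1$.

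For (n2), suppose $a \preceq b$. Applying $F$ transports this into $F(a) \preceq F(b)$ in $C_m(X)$, which gives an inequality $f_b(x) \leq \psi(f_a(x))$ for a monotone $\psi$ tending to $\infty$ (in the quasi-equivalence case $\psi$ is affine). This translates to the set inclusion $A_n \subset B_{\psi(n)}$, where $A_n$ and $B_n$ are the sublevel sets of $f_a$ and $f_b$ at height $n$. Monotonicity of the finitely additive measure $\mu$ then gives $\mu(A_n) \leq \mu(B_{\psi(n)})$; since $\{\mu(B_m)\}$ is non-decreasing and bounded above by $1$, every unbounded subsequence converges to the same limit, and passing to $n \to \infty$ yields $\hat\nu_\mu(a) \leq \hat\nu_\mu(b)$.

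For (n3), I would fix expanding sequences $\{A_n\}$ for $a$ and $\{B_n\}$ for $b$ and first verify that $\{A_n \cap B_n\}$ and $\{A_n \cup B_n\}$ are themselves expanding sequences (this is routine, since the neighbourhood operator $N_{1/2}$ respects $\cap$ and $\cup$ in the required direction). Then I would identify them with $a \land b$ and $a \lor b$ via Lemma \ref{lattice1}: the diagonal functions for these meets and joins are $\max(f_a, f_b)$ and $\min(f_a, f_b)$, whose sublevel sets at height $n$ are exactly $A_n \cap B_n$ and $A_n \cup B_n$. Finite additivity then supplies the inclusion–exclusion identity $\mu(A_n \cap B_n) + \mu(A_n \cup B_n) = \mu(A_n) + \mu(B_n)$ at each $n$, and (n3) follows by taking the limit. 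The main technical point is the correct identification of expanding sequences for the lattice operations in (n3); once this is in hand, the conclusion is just finite additivity combined with monotone convergence of bounded real sequences.
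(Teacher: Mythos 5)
Your proposal is correct and follows essentially the same route as the paper: compute $\hat\nu_\mu$ on $\mathbf 0$ and $\mathbf 1$ via their (bounded, resp.\ all of $X$) expanding sequences, get (n2) from the inclusion $A_n\subset B_{\psi(n)}$ and monotonicity of $\mu$, and get (n3) by identifying $\{A_n\cap B_n\}$ and $\{A_n\cup B_n\}$ as expanding sequences for $a\land b$ and $a\lor b$ (via Lemma \ref{lattice1} and the sublevel-set description) and passing to the limit in the finite inclusion–exclusion identity. The only difference is that you spell out details the paper leaves implicit ((n2) and the identification of the lattice operations on expanding sequences), and these details are right.
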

\begin{proof}
Let $d\in\mathcal M(X)$. If $[d]=\mathbf 1$ then $A_n=X$ for sufficiently great $n$, hence $\hat\nu_\mu(\mathbf 1)=1$. If $[d]=\mathbf 0$ then $A_n$ is bounded for any $n\in\mathbb N$, hence $\mu(A_n)=0$, therefore $\hat\nu_\mu(\mathbf 0)=0$. (n2) is obvious.

If $\mathcal A=\{A_n\}$ and $\mathcal B=\{B_n\}$, $n\in\mathbb N$, are expanding sequences for $a$ and $b$, respectively, then the expanding sequences for $a\land b$ and $a\lor b$ are $\{A_n\cap B_n\}$ and $\{A_n\cup B_n\}$, respectively. As $\mu(A_n\cap B_n)+\mu(A_n\cup B_n)=\mu(A_n)+\mu(B_n)$, passing to the limit, we get (n3).

\end{proof}

Let $L^c(X)$ denote the Boolean algebra generated by $E(M^c(X))$ as above. By Smiley--Horn--Tarski Extension Theorem (\cite{Konig}, Theorem 3.4; cf. \cite{Smiley}, Section 1), $\hat\nu_\mu$ extends to a function $\bar\nu_\mu:L^c(X)\to[0,1]$ by the formula 
$$
\bar\nu_\mu(e_1+\cdots e_n)=\sum_{i=1}^n(-2)^{i-1}\sigma_{i}(e_1,\ldots,e_n), 
$$
where 
$$
\sigma_{i}(e_1,\ldots,e_n)=\sum\hat\nu_\mu(e_{j_1}\land\cdots\land e_{j_i}), 
$$
and the summation extends over all properly monotone $i$-tuples $j_1<\cdots<j_i$ of the integers $1,2,\ldots,n$, and $\bar\nu_\mu$ is an additive measure, i.e. it satisfies the following properties:
\begin{itemize}
\item[(m1)]
$\bar\nu_\mu(\mathbf 1)=1$;
\item[(m2)]
$\bar\nu_\mu(\overline{a})=1-\bar\nu_\mu(a)$ for any $a\in L^c(X)$, where $\overline{a}=\mathbf 1+a$ is the complement of $a$;
\item[(m3)]
$\bar\nu_\mu(a\land b)+\bar\nu_\mu(a\lor b)=\bar\nu_\mu(a)+\bar\nu_\mu(b)$ for any $a,b\in L^c(X)$.

\end{itemize} 

The following remark explains why, from the point of view of measure theory, the case of coarse equivalence is more interesting that the case of quasi-equivalence.
 
\begin{remark}
Let $d\in\mathcal M(X)$, $[d]_q\neq \mathbf 0$, $[d]_c=\mathbf 0$, and let $\mathcal A=\{A_n\}_{n\in\mathbb N}$ be the extending sequence for $d$. Then all $A_n$, $n\in\mathbb N$, are bounded, hence $\bar\nu_\mu([d]_q)=0$. On the other hand, if $[d]_c\neq \mathbf 0$ then all $A_n$ for sufficiently great $n$ are infinite, and there exists an admissible measure $\mu$ on $X$ such that $\mu(A_n)\neq 0$, hence $\bar\nu_\mu([d]_c)\neq 0$.  

\end{remark}

For $a\in L^c(X)$, let $U_a\subset\widehat X$ be the clopen set corresponding to $a$ under the Stone duality, i.e. the characteristic function of $U_a$ is $a$. Then $U_a\cap U_b=U_{a\land b}$ and $U_a\cup U_b=U_{a\lor b}$ for any $a,b\in L^c(X)$. Recall that $L^c(X)$ is the collection of the characteristic functions of all clopen sets of $\widehat X^c$. We may take infinite unions and intersections of clopen sets. If $a,a_1,a_2,\ldots\in L^c(X)$ satisfy $a_i\land a_j=0$ when $i\neq j$, and $a=\lor_{i=1}^\infty a_i$ then only finite number of $a_i$, $i\in\mathbb N$, can be non-zero (equivalently, only finite number of $U_{a_i}$ can non-empty) (cf. \cite{Kakutani}): if $a\in L^c(X)$ then $U_a$ is a closed subset of the compact space $\widehat X^c$, while $U_{a_i}$, $i\in\mathbb N$, is a disjoint open cover of $U_a$, hence there exist $a_{i_1},\ldots, a_{i_n}$ such that $U_a=\cup_{j=1}^n U_{a_{i_j}}$, therefore, $U_{a_i}=\emptyset$ if $i\neq i_1,\ldots, i_n$. Then, by Hahn–-Kolmogorov Theorem (\cite{Tao}, Theorem 1.7.8), $\bar\nu_\mu$ can be extended to a unique sigma-additive meaure $\nu_\mu$ on the sigma-algebra generated by $L^c(X)$, i.e. by clopen sets of $\widehat X^c$.

The next Lemma shows that the set of projections of type I is sufficiently large.

\begin{lem}\label{measure0}
Let $a\in E(M(X))$, $a=[d_{\mathcal A}]$ with the expanding sequence $\mathcal A=\{A_n\}$, and let $a_n=[d_{\mathcal E_{A_n}}]$, $n\in\mathbb N$. Then  $\cup_{n\in\mathbb N}U_{a_n}\subset U_a$, and $\nu_\mu(U_a)=\nu_\mu(\cup_{n\in\mathbb N}U_{a_n})$ for any admissible measure $\mu$ on $X$.

\end{lem}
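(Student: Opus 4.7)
The plan is to reduce the statement to a squeeze estimate on the values $\hat\nu_\mu(a_n)$, passing from the $\sigma$-additive extension $\nu_\mu$ to the elementary measure $\hat\nu_\mu$ via continuity from below.

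First, I will verify the inclusion $\cup_{n\in\mathbb N}U_{a_n}\subset U_a$. The observation preceding Lemma \ref{compar} says that for the expanding sequence $\mathcal A=\{A_n\}$ we have $d_{\mathcal E_{A_n}}\preceq d_{\mathcal A}$ in $\mathcal M(X)$, so $a_n\preceq a$ in $E(M^c(X))$. Under Stone duality, $a_n\land a=a_n$ translates to $U_{a_n}\cap U_a=U_{a_n}$, i.e., $U_{a_n}\subset U_a$, and the inclusion follows by taking the union.

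Next, I will check that $\{U_{a_n}\}$ is an \emph{increasing} family of clopen sets, so that continuity from below applies directly. Since $A_n\subset A_{n+1}$, we have $N_{k/2}(A_n)\subset N_{k/2}(A_{n+1})$ for every $k$, whence the associated defining functions satisfy $\delta_{\mathcal E_{A_n}}\geq\delta_{\mathcal E_{A_{n+1}}}$ pointwise. From the infimum formula (\ref{inf1}) this gives $d_{\mathcal E_{A_n}}\preceq d_{\mathcal E_{A_{n+1}}}$, and therefore $a_n\preceq a_{n+1}$. By $\sigma$-additivity of $\nu_\mu$ on the increasing sequence $\{U_{a_n}\}$,
$$
\nu_\mu\Bigl(\bigcup_{n\in\mathbb N}U_{a_n}\Bigr)=\lim_{n\to\infty}\nu_\mu(U_{a_n})=\lim_{n\to\infty}\hat\nu_\mu(a_n).
$$

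The remaining task is to show $\lim_{n\to\infty}\hat\nu_\mu(a_n)=\hat\nu_\mu(a)$, and I will do this by a two-sided squeeze. For the upper bound, property (n2) and $a_n\preceq a$ yield $\hat\nu_\mu(a_n)\leq\hat\nu_\mu(a)$ for every $n$. For the lower bound, the expanding sequence defining $a_n$ is $\{N_{k/2}(A_n)\}_{k\in\mathbb N}$, and since $A_n\subset N_{k/2}(A_n)$ for all $k\geq 1$, monotonicity of $\mu$ gives
$$
\hat\nu_\mu(a_n)=\lim_{k\to\infty}\mu(N_{k/2}(A_n))\geq\mu(A_n).
$$
Letting $n\to\infty$ and using $\hat\nu_\mu(a)=\lim_{n\to\infty}\mu(A_n)$ pins down $\lim_{n\to\infty}\hat\nu_\mu(a_n)=\hat\nu_\mu(a)=\nu_\mu(U_a)$, which is the asserted equality.

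There is no serious obstacle in the argument: the only nontrivial ingredient is continuity from below of the Hahn--Kolmogorov extension $\nu_\mu$ on the increasing sequence $\{U_{a_n}\}$, and everything else is a direct squeeze between $\mu(A_n)$ and $\hat\nu_\mu(a)$. What makes the lemma useful is precisely that the bound $\hat\nu_\mu(a_n)\geq\mu(A_n)$ is available with no loss, since the first term of $\mathcal E_{A_n}$ already contains $A_n$.
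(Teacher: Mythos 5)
Your proof is correct and follows essentially the same route as the paper: the monotone chain $a_1\preceq a_2\preceq\cdots\preceq a$ of clopen sets, the bound $\mu(A_n)\leq\hat\nu_\mu(a_n)\leq\hat\nu_\mu(a)$ coming from $A_n\subset N_{k/2}(A_n)$, and continuity from below of $\nu_\mu$ on the increasing family $\{U_{a_n}\}$. Presenting it as a two-sided squeeze rather than a single inequality plus the inclusion $\cup_n U_{a_n}\subset U_a$ is only a cosmetic difference.
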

\begin{proof}
It follows from the definition of the metrics $d_{\mathcal E_{A_n}}$, $d_{\mathcal A}$ that 
$$
d_{\mathcal E_{A_1}}(x,x')\geq d_{\mathcal E_{A_2}}(x,x')\geq \cdots\geq d_{\mathcal A}(x,x') 
$$
for any $x\in X$, hence $a_1\preceq a_2\preceq\cdots\preceq a$ and, equivalently, $U_{a_1}\subset U_{a_2}\subset\cdots\subset U_a$.

The inclusion $A\subset N_k(A)$, $k\in\mathbb N$, implies that 
$$
\mu(A_n)\leq \nu_\mu(U_{a_n})=\lim_{k\to\infty}\mu(N_k(A_n))=\sup_{k\in\mathbb N}\mu(N_k(A)). 
$$
Then sigma-additivity of $\nu_\mu$ implies that 
$$
\nu_\mu(U_a)=\lim_{n\to\infty}\mu(A_n)=\sup_{n\in\mathbb N}\mu(A_n)\leq \sup_{n\in\mathbb N}\nu_\mu(U_{a_n})=\nu_\mu(\cup_{n\in\mathbb N}U_{a_n}).
$$

\end{proof}

Let $E_0(M(X))\subset E(M(X))$ denote the set of all projections of type I. It is not a lattice, as the meet $e\land f$ of two projections, $e$ and $f$, of type I may be of type II, but it is a join semilattice: if $e=[d_{\mathcal E_A}]$, $f=[d_{\mathcal E_B}]$ then $e\lor f=[d_{\mathcal E_{A\cup B}}]$. Let $L_0(X)\subset L(X)$ denote the Boolean subalgebra generated by $E_0(M(X))$, let $\widehat X_0$ be the Stone dual space for $L_0(X)$, and let $\pi:\widehat X\to\widehat X_0$ be the canonical surjective map induced by the inclusion $L_0(X)\subset L(X)$. 
 
Let $\Sigma$ (resp., $\Sigma_0$) denote the sigma-algebra on $\widehat X$ (resp., on $\widehat X_0$) generated by all clopen sets, and let $\Sigma'$ be the sigma-algebra on $\widehat X$ of sets $\pi^{-1}(U)$, $U\in\Sigma_0$. For a sigma-algebra $\Psi$ on $Y$, let $\mathcal F(Y,\Psi)$ denote the space of all measurable functions on $Y$. 

\begin{prop}\label{measure1}
Let $U\in\Sigma$. Then there exists $V\in\Sigma'$ such that $V\subset U$ and $\nu_\mu(V)=\nu_\mu(U)$ for any admissible measure $\mu$ on $X $.

\end{prop}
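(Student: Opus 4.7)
The plan is to define
\[
\mathcal{C}=\{U\in\Sigma : \exists\,V\in\Sigma' \text{ with } V\subset U \text{ and } \nu_\mu(V)=\nu_\mu(U) \text{ for every admissible } \mu\}
\]
and to prove $\mathcal{C}=\Sigma$ by a monotone class argument, with Lemma \ref{measure0} supplying the base input.

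First I check closure of $\mathcal{C}$ under countable unions and countable intersections. Given $U_i\in\mathcal{C}$ with witnesses $V_i\in\Sigma'$ for $i\in\mathbb{N}$, the candidate $V=\bigcup_i V_i$ lies in $\Sigma'$ and is a subset of $\bigcup_i U_i$, and $\sigma$-subadditivity of $\nu_\mu$ gives
\[
\nu_\mu\Bigl(\bigcup_i U_i\setminus V\Bigr)\le\sum_i\nu_\mu(U_i\setminus V_i)=0,
\]
so $\bigcup_i U_i\in\mathcal{C}$. The analogous calculation with $V=\bigcap_i V_i$, using the inclusion $\bigcap_i U_i\setminus V\subset\bigcup_i(U_i\setminus V_i)$, gives closure under countable intersections.

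Next, the main step is to show that $U_a\in\mathcal{C}$ for every $a\in L(X)$, i.e.\ every clopen of $\widehat{X}$ is in $\mathcal{C}$. For $a\in E(M(X))$ this is precisely Lemma \ref{measure0}: the witness is $V=\bigcup_n U_{a_n}$ with $a_n=[d_{\mathcal{E}_{A_n}}]\in E_0(M(X))\subset L_0(X)$, which therefore lies in $\Sigma'$. A general $a\in L(X)$ is a finite Boolean combination of projections, so placing $a$ in disjunctive normal form reduces the task, via closure of $\mathcal{C}$ under finite unions and intersections, to producing a witness $V\in\Sigma'$ with $V\subset U_e^c$ and $\nu_\mu(V)=\nu_\mu(U_e^c)$ for each single projection $e\in E(M(X))$. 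My proposal is to exhibit a countable family of orthogonal type I projections $b^{(n,k)}=[d_{\mathcal{E}_{B^{(n,k)}}}]\in E_0(M(X))$, where each sparse subset $B^{(n,k)}\subset X$ is chosen in the spirit of the density theorem for $\{\tau_\omega\}_{\omega\in\Omega}$ by picking points of $X$ far from $A_n$ at level $k$; then $B^{(n,k)}\cap A_m$ is finite for every $m$, so Lemma \ref{0} yields $b^{(n,k)}\land e=\mathbf{0}$, hence $U_{b^{(n,k)}}\subset U_e^c$, and a cofinal indexing in $(n,k)$ should force $\nu_\mu\bigl(\bigcup_{n,k}U_{b^{(n,k)}}\bigr)=\nu_\mu(U_e^c)$ for every admissible $\mu$.

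Once this second step is in hand, $\mathcal{C}$ contains the Boolean algebra of all clopens of $\widehat{X}$ and is closed under countable unions and intersections, so the monotone class theorem gives $\mathcal{C}\supset\sigma(\{\text{clopens}\})=\Sigma$, and therefore $\mathcal{C}=\Sigma$ as required. The main obstacle is the complement step: while Lemma \ref{measure0} directly supplies the inner $\Sigma'$-approximation of $U_e$ for any projection $e$, the corresponding inner approximation of $U_e^c$ has to be engineered by hand, and one must verify that the countable union of sparse-set clopens exhausts $U_e^c$ in $\nu_\mu$-measure simultaneously for every admissible $\mu$, rather than one $\mu$ at a time.
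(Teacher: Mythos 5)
Your reduction is correct as far as it goes: the class $\mathcal C$ of sets admitting an inner $\Sigma'$-approximation is closed under countable unions and intersections, Lemma \ref{measure0} places every $U_e$, $e\in E(M(X))$, in $\mathcal C$, and a monotone class argument would finish once every clopen set of $\widehat X$ is in $\mathcal C$. But the complement step, which you leave at the level of ``should force'', is precisely where the entire content of the Proposition beyond Lemma \ref{measure0} lies: since $\mathcal C$ is not closed under complementation, you need for each projection $e$ an inner $\Sigma'$-approximation of $U^0_e$, equivalently an outer $\Sigma'$-approximation of $U_e$ of the same measure, and Lemma \ref{measure0} supplies only the inner approximation of $U_e$ (the paper's own one-line proof adds nothing here either). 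So your attempt has a genuine gap, and moreover the device you propose to fill it cannot work.

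Indeed, take $X=\{2^n:n\in\mathbb N\}$ as in Example \ref{ex2}, split it into infinitely many infinite sets $S_k$, and let $e=[d_{\mathcal A}]_c$ with $A_n=S_1\cup\cdots\cup S_n$. By Lemma \ref{0}, a type I projection $[d_{\mathcal E_B}]_c$ satisfies $U_{[d_{\mathcal E_B}]}\subset U^0_e$ exactly when $B\cap S_k$ is finite for every $k$; such $B$ form an ideal $\mathcal I$. Since $S_{n+1}\setminus(B_1\cup\cdots\cup B_s)$ is infinite for any $B_1,\ldots,B_s\in\mathcal I$, the sets $X\setminus A_n$ and $X\setminus B$, $B\in\mathcal I$, have the finite intersection property, so there is a free ultrafilter $\omega$ with $\omega(A_n)=0$ for all $n$ and $\omega(B)=0$ for all $B\in\mathcal I$. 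The $\{0,1\}$-valued measure $\mu=\delta_\omega$ is admissible, and $\nu_\mu(U^0_e)=1-\lim_n\omega(A_n)=1$, while each clopen $U_{[d_{\mathcal E_B}]}$ contained in $U^0_e$ has $\nu_\mu$-measure $\lim_n\mu(N_{n/2}(B))=\omega(B)=0$; hence any countable union of such clopens has measure $0$ and can never exhaust $U^0_e$ for this $\mu$. (Pushing the same idea further, a compactness argument with the points $\tau_{\omega'}$ shows that in this example every $V\in\Sigma'$ with $V\subset U^0_e$ has $\nu_\mu(V)=0$, so the obstruction is not caused by your particular choice of witnesses.) What your bookkeeping does extract from Lemma \ref{measure0} is the weaker statement that every $U\in\Sigma$ admits $V\in\Sigma'$ with $\nu_\mu(U\triangle V)=0$ for all admissible $\mu$ --- the class of sets approximable in symmetric difference is closed under complements, so it is a $\sigma$-algebra containing the generators $U_e$ --- and that weaker form is what the Corollary following the Proposition actually uses; but it is not the inner approximation $V\subset U$ asserted in the statement, and your argument does not establish the latter.
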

\begin{proof}
This directly follows from Lemma \ref{measure0}.

\end{proof}

\begin{cor}
The spaces $\mathcal F(\widehat X,\Sigma)$, $\mathcal F(\widehat X,\Sigma')$ and $\mathcal F(\widehat X_0,\Sigma_0)$
are canonically isomorphic.

\end{cor}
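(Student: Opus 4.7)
The plan is to exhibit the two canonical identifications that compose to give the three-way isomorphism. The first is the pullback $\pi^{\ast}\colon\mathcal F(\widehat X_0,\Sigma_0)\to\mathcal F(\widehat X,\Sigma')$, $f\mapsto f\circ\pi$. It is well-defined because by construction $\Sigma'=\pi^{-1}(\Sigma_0)$, injective because $\pi$ is surjective, and surjective by the standard Doob--Dynkin factorization: indicators $\chi_{\pi^{-1}(U)}$, $U\in\Sigma_0$, equal $\chi_U\circ\pi$, so every $\Sigma'$-measurable simple function factors through $\pi$, and the factorization extends to all $\Sigma'$-measurable functions by passage to monotone pointwise limits. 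Thus the $\mathcal F(\widehat X_0,\Sigma_0)\cong \mathcal F(\widehat X,\Sigma')$ half is essentially formal.

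The second identification, the inclusion $\mathcal F(\widehat X,\Sigma')\hookrightarrow\mathcal F(\widehat X,\Sigma)$, is where Proposition \ref{measure1} enters. Given $g\in\mathcal F(\widehat X,\Sigma)$, I would apply Proposition \ref{measure1} to each super-level set $\{g>t\}$ for $t$ in a countable dense subset of $\mathbb R$, producing $\Sigma'$-measurable $V_t\subset\{g>t\}$ such that $\{g>t\}\setminus V_t$ is $\nu_\mu$-null for every admissible $\mu$. Then $g'(x)=\sup\{t\in\mathbb Q:x\in V_t\}$ is a $\Sigma'$-measurable function with $g'\le g$ everywhere and $\{g'\ne g\}\subset\bigcup_t(\{g>t\}\setminus V_t)$, a countable union of universally null sets. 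This exhibits $g'$ as a $\Sigma'$-measurable representative of $g$.

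The main obstacle is pinning down the sense in which $\mathcal F$ is quotiented so that $g'$ represents the same element as $g$. The natural reading compatible with Proposition \ref{measure1} is that $\mathcal F(Y,\Psi)$ is considered modulo the ideal of sets that are null for every admissible measure (equivalently, after completing $\Sigma'$ by all $\nu_\mu$ simultaneously). Once this convention is fixed the two constructions are mutually inverse; composing with $\pi^{\ast}$ then yields the canonical isomorphism of all three spaces asserted by the corollary.
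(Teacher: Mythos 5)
Your proposal is correct and takes essentially the same route as the paper: Proposition \ref{measure1} supplies a $\Sigma'$-measurable representative of any $\Sigma$-measurable function (your rational level-set construction just makes explicit what the paper leaves implicit), while the identification $\mathcal F(\widehat X_0,\Sigma_0)\cong\mathcal F(\widehat X,\Sigma')$ via $\pi$ is the paper's other half, proved there by showing $\Sigma'$-measurable functions are constant on fibers of $\pi$ rather than by your Doob--Dynkin simple-function argument --- a cosmetic difference. The convention you flag, identifying functions that agree outside a set that is $\nu_\mu$-null for every admissible $\mu$, is exactly the convention the paper's own proof tacitly uses.
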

\begin{proof}
It follows from Proposition \ref{measure1} that if a function $f$ on $\widehat X$ is measurable with respect to $\Sigma$ then there exists a function $g$ on $\widehat X$, measurable with respect to $\Sigma'$, and $\nu_\mu(\{x\in\widehat X:f(x)\neq g(x)\})=0$, hence the canonical inclusion $\mathcal F(\widehat X,\Sigma')\subset \mathcal F(\widehat X,\Sigma)$ is an isomorphism.

Let $f$ be a measurable function on $\widehat X$ with respect to $\Sigma'$, let $x_1,x_2\in\widehat X$ such that $\pi(x_1)=\pi(x_2)$, and let $f(x_1)<c<f(x_2)$. There exists $U\in\Sigma_0$ such that $\{x\in\widehat X:f(x)<c\}=\pi^{-1}(U)$. Then $x_1\in\pi^{-1}(U)$, $x_2\notin\pi^{-1}(U)$, which contradicts $\pi(x_1)=\pi(x_2)$, hence any measurable function on $\widehat X$ with respect to $\Sigma'$ is constant on $\pi^{-1}(y)$ for any $y\in\widehat X_0$, hence the map $\mathcal F(\widehat X_0,\Sigma_0)\to\mathcal F(\widehat X,\Sigma')$ induced by $\pi$ is an isomorphism.  

\end{proof}

\section{Examples}

\begin{example}\label{typeI}
Let $\varphi:\mathbb N\to\mathbb N$ be a map that takes each value infinitely many times and satisfies $\varphi(n)\leq n$ for any $n\in\mathbb N$. Let $x_n^\pm=(n^2,\pm\varphi(n))$, and let $X=\{x_n^\pm:n\in\mathbb N\}\subset \mathbb Z^2$ with the Manhattan metric. We write $n(x)=n$ when $x=x_n^\pm$. Let $A^\pm=\{x_n^\pm:n\in\mathbb N\}$. Let $b_\pm(x,y')=d_X(x,A^\pm)+d_X(y,A^\pm)+1$. Then the equivalence class $[b_+]_c[b_-]_c$ is represented by the metric 
$b\in\mathcal M(X)$ such that 
\begin{eqnarray*}
b(x,x')&=&\inf_{y\in X}[d_X(x,A^+)+d_X(y,A^+)+d_X(x,A^-)+d_X(y,A^-)+2]\\
&=&d_X(x,A^+)+d_X(x,A^-)+4.
\end{eqnarray*}

Let $\mathcal C=\{C_n\}_{n\in\mathbb N}$ be the expanding sequence for the metric $b$. For $x=x_n^+$ we have $d_X(x,A^+)=0$ and $d_X(x,A^-)=2\varphi(n)$, while for $x=x_n^-$ we have $d_X(x,A^+)=2\varphi(n)$ and $d_X(x,A^-)=0$. 
Then 
$$
C_m=\{x\in X:2\varphi(n(x))\leq m-4\},\quad C_{m+2}=\{x\in X:2\varphi(n(x))\leq m-2\}. 
$$
Let $m$ be even, and let $n$ satisfy $\varphi(n)=\frac{m-2}{2}$. Then $x_n^\pm\in C_{m+2}\setminus C_m$. There are infinitely many such $n$'s. If $y\in C_m$ then $n(y)\neq n(x)$, hence $d_X(x_n^\pm,C_m)\geq |n^2-n(y)^2|\geq n$, thus there is no $k$ such that $C_{m+2}\subset N_k(C_m)$, hence the projection $[b]_c$ is of type II, being the product of type I projections. 

\end{example}

\begin{example}\label{ex1}
Let $X=\mathbb N$ with the standard metric, let $A=\{2^k:k\in\mathbb N\}\subset X$, and let $b=d_{\mathcal E_A}\in\mathcal M(X)$. We claim that $[b]$ is not complementable in $E(M(X))$, i.e. there is no $e\in E(M(X))$ such that $e\land [b]=0$ and $e\lor[b]=1$. It suffices to prove this claim for the case of coarse equivalence, as $M^c(X)$ is the quotient of $M^q(X)$. 

Suppose that such $e$ exists, and let $\mathcal B=\{B_n\}_{n\in\mathbb N}$ be an expanding sequence such that $e=[d_{\mathcal B}]$. The condition $[d_{\mathcal B}]_c\lor[b]_c=1$ implies that there exists $m\in\mathbb N$ such that $N_{m/2}(A)\cup B_m=X$. Then $B_m$ contains all points between $2^k+m/2$ and $2^{k+1}-m/2$ for all $k$ such that $2^k>m$. As $N_{n/2}(B_m)\subset B_{n+m}$, the set $B_{2n+1}$ contains all the points of the form $2^k$, where $k>\log_2 m$, i.e. $A\cap B_{2n+1}$ is infinite, hence $N_k(A)\cap B_{2n+1}$ is infinite too, and by Lemma \ref{0}, $[b]_c[d_{\mathcal B}]_c\neq 0$.

\end{example}

\begin{example}\label{ex2}
Let $X=\{2^n:n\in\mathbb N\}$ with the standard metric. An important feature of this case is that the sets $A$ and $N_k(A)$ differ by only a finite number of points for any $k\in\mathbb N$ and for any $A\subset X$. Let $b_A=d_{\mathcal E_A}\in\mathcal M(X)$. The set of all $[b_A]_c$, $A\subset X$, forms a sublattice $L_0(X)$ in $E(M^c(X))$. Let $\varphi\in\Hom(E(M^c(X)),\mathbb Z/2)$. Set $\omega(A)=\varphi([b_A]_c)$ (here $A\neq\emptyset$, so we have to set, additionally, $\omega(\emptyset)=0$). Note that if $[b_A]_c=[b_B]_c$ then $A$ and $B$ differ by a finite number of points. Indeed, $b_A\sim_c b_B$ implies that there exists a monotone function $\psi$ on $[0,\infty)$ with $\lim_{t\to\infty}\psi(t)=\infty$ such that $d_X(x,A)\leq \psi(d_X(x,B))$ and $d_X(x,B)\leq \psi(d_X(x,A))$. If $x\in B$ then $d_X(x,A)\leq\psi(0)$ and if $x\in A$ then $d_X(x,B)\leq\psi(0)$, hence there exists $k\in\mathbb N$ such that $A\subset N_k(B)$ and $B\subset N_k(A)$. 

Note that in this case $L_0(X)$ is not only a lattice, but also a Boolean algebra. Indeed, let $B=X\setminus A$. Then $N_k(A)\cap N_k(B)$ is finite for any $k$. Therefore, $[b_A]_c\land [b_B]_c=0$ and $[b_A]_c\lor [b_B]_c=1$. Therefore, $\omega(B)=1-\omega(A)$. Also we have $\omega(X)=1$, $\omega(C)=0$ for any finite set $C\subset X$. Other properties of a free ultrafilter are obviously true, so $\omega$ is a free ultrafilter on $X$. 

Let $\widehat X_0=S(L_0(X))$ denote the Stone space of the Boolean algebra $L_0(X)$. The restriction determines a map $\pi:\widehat X^c\to\widehat X_0$ by $\pi(\varphi)= \varphi|_{L_0(X)}$. 
Let $A\subset X$. Then 
$$
\tau_\omega|_{L_0(X)}([b_A]_c)=\tau_\omega([b_A]_c)=\lim_{n\to\infty}\omega(N_n(A))=\omega(A),
$$ 
i.e. $\tau_\omega|_{L_0(X)}=\omega$, hence the map $\pi$ is surjective.

\end{example}

\end{document}